\documentclass[reqno]{amsart}
\usepackage{amsmath,amssymb}
\usepackage{relsize}
\usepackage{graphicx,color}
\usepackage[all]{xy}
\textwidth=13.2truecm
%\oddsidemargin=2truecm
%\evensidemargin=0truecm

\theoremstyle{plain}
\newtheorem{introtheorem}{Theorem}
\newtheorem{introcorollary}[introtheorem]{Corollary}
\newtheorem{theorem}{Theorem}[section]
\newtheorem{proposition}[theorem]{Proposition}

\newtheorem{corollary}[theorem]{Corollary}

\theoremstyle{definition}
\newtheorem{definition}[theorem]{Definition}
\newtheorem{example}[theorem]{Example}

\theoremstyle{remark}
\newtheorem{remark}[theorem]{Remark}

\def\Q{{\mathbb Q}}

\def\cat0{\mathrm{cat}_0}

\def\dim{\mathrm{dim}}

\begin{document}
	
	\title[]
	{On the Euler-Poincar\'{e} characteristics  of a simply connected  rationally elliptic CW-complex}
	
	\author{Mahmoud Benkhalifa}
	\address{Department of Mathematics. Faculty of  Sciences, University of Sharjah. Sharjah, United Arab Emirates}
	
	\email{mbenkhalifa@sharjah.ac.ae}

	%\date{\today}
	
	\keywords{ Rationaly elliptic space, Sullivan model, Quillen model, Euler-Poincaré characteristic, Whitehead exact sequence}
	
	\subjclass[2000]{  55P62}
	\begin{abstract} 
		For a simply connected  rationally elliptic CW-complex $X$, we show that the cohomology and the homotopy Euler-Poincaré characteristics are related to two new numerical invariants namely $\eta_{X}$ and $\rho_{X}$ which we define using the Whitehead exact sequences of the Quillen and the Sullivan models of $X$.
	\end{abstract}
	\maketitle
\section{introduction}
A simply connected CW-complex $X$ is called rationally  elliptic if both the graded vector spaces $H^{*}(X; \Q)$ and $\pi_{*}(X) \otimes \Q$ are finite dimensional. To such a space $X$ are attached two numerical invariants namely the cohomology Euler characteristic $\chi_{H}$ and the homotopy Euler characteristic $\chi_{\pi}$ defined by
$$
\chi_{H}=\sum_{i\geq 0}(-1)^{i} \operatorname{dim} H^{i}(X; \Q) \quad, \quad \chi_{\pi}=\sum_{i\geq 0}(-1)^{i} \operatorname{dim} \pi_{i}(X) \otimes \Q.
$$
%$$
%H^{\rm{ev}}_{}(X; \Q)=\sum_{i} \operatorname{dim} H^{2i}(X; \Q) \quad, \quad \pi_{\rm{ev}}(X) \otimes \Q=\sum_{i} \operatorname{dim} \pi_{2i}(X) \otimes \Q
%$$
%$$
%H^{\rm{odd}}_{}(X; \Q)=\sum_{i} \operatorname{dim} H^{2i+1}(X; \Q)\quad , \quad \pi_{\rm{odd}}(X) \otimes \Q=\sum_{i} \operatorname{dim} \pi_{2i+1}(X) \otimes \Q
%$$
Computing   $\chi_{H}$ and $\chi_{\pi}$ and studying their properties is a major task in rational homotopy theory. For instance, it is well-known that (see \cite{B4}, Prop. $\left.32.10\right)$
 \begin{itemize}
 	\item $\chi_{H} \geq 0$ and $\chi_{\pi} \leq 0.$
 	\item $\chi_{H}>0$ if and only if $\chi_{\pi}=0.$ 
 \end{itemize}
If $X^{[i]}$ denotes the $i^{\text {th }}$ Postnikov section of $X$ and $X^{i}$ denotes its $i^{\text {th }}$ skeleton, then   we introduce two  new numerical homotopy invariants namely
$$
\rho_{X}=1+\sum_{i \geq 2}(-1)^{i} \operatorname{dim} H^{i}(X^{[i-2]}; \Q) \quad, \quad \eta_{X}=1+\sum_{i \geq 2}(-1)^{i} \operatorname{dim} \Gamma_{i}(X),
$$
%$$
%\rho^{\rm{even}}=1+\sum_{i \geq 1} \operatorname{dim} H^{2i}(X^{[2i-2]}; \Q) \quad, \quad \eta_{\rm{even}}=1+\sum_{i \geq 1} \operatorname{dim} \Gamma_{2i}(X)
%$$
%$$
%\rho^{\rm{odd}}=\sum_{i \geq 1} \operatorname{dim} H^{2i+1}(X^{[2i-1]}; \Q) \quad\quad, \quad \eta_{\rm{odd}}=\sum_{i \geq 1} \operatorname{dim} \Gamma_{2i+1}(X)
%$$
%$$
%\rho^{}=\rho^{\rm{even}}-\rho^{\rm{odd}} \quad\quad\quad\quad\quad\quad\quad\quad\quad\quad\quad\quad , \quad \eta_{}=\eta_{\rm{even}}-\eta_{\rm{odd}}
%$$
where  $\Gamma_{i}(X)=\operatorname{ker}(\pi_{i}(X^{i}) \otimes \Q \longrightarrow \pi_{i}(X^{i}, X^{i-1}) \otimes \Q)$ for $i \geq 2$. 

By exploiting the well-known properties of a rationally  elliptic CW-complex  $X$ as well as the virtue of the Whitehead  exact sequences associated respectively to the Sullivan model and the Quillen model of $X$, we prove the following

\begin{introtheorem} If $X$ is a rationally elliptic CW-complex $X$, then
\begin{enumerate}
	\item $H^{i}(X^{[i-2]}; \Q)\cong \Gamma_{i-1}(X)$ for all $i \geq 2;$
	\item  $\rho_{X}= \eta_{X}>0;$
	\item  	Either $\rho_{X}=\chi_{H}$ or $\rho_{X}=-\chi_{\pi}$;
	\item If $X$ is \text{\rm{2}}--connected, then  $\Gamma_{i}(X)= \pi_{i}(X^{i}) \otimes \Q$ for all $i \geq 2$.                
%	\item  $\rho^{\rm{ev}}\geq \rho^{\rm{odd}}$ and $\eta_{\rm{ev}}\geq \eta_{\rm{odd}}$
%\item  $\eta_{\rm{ev}}\geq \eta_{\rm{odd}}$
%\item  $\rho^{\rm{ev}}= \eta_{\rm{ev}}$ and $\rho^{\rm{odd}}= \eta_{\rm{odd}}$
%\item  $\rho^{\rm{odd}}= \eta_{\rm{odd}}$ 
\end{enumerate}
\end{introtheorem}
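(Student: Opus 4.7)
The plan is to work with the minimal Sullivan model $(\Lambda V,d)$ and the minimal Quillen model $(\L(W),\partial)$ of $X$, using the standard identifications $V^{n}\cong \Hom(\pi_n(X)\otimes\Q,\Q)$ and $W_{n-1}\cong \pi_n(X)\otimes\Q$ together with the Whitehead exact sequences these two models carry.

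For Part (1), the key observation is that the minimality condition $d(V^{n})\subset \Lambda^{\geq 2}V^{<n}$ forces $d$ to restrict to the sub-DGA $(\Lambda V^{\leq i-2},d)$, which is precisely the minimal Sullivan model of the Postnikov section $X^{[i-2]}$. Since $X$ is simply connected, every element of $(\Lambda V^{\leq i-2})^{i}$ is decomposable, so the only new degree-$i$ cocycles come from $d\colon V^{i-1}\to (\Lambda^{\geq 2}V^{\leq i-2})^{i}$. I would then identify $H^{i}(\Lambda V^{\leq i-2},d)$ with the appropriate quotient of $V^{i-1}$ by the cocycles already killed inside $\Lambda V^{\leq i-2}$, and match this quotient with $\Gamma_{i-1}(X)$ through the Whitehead exact sequence of the Quillen model, in which the algebraic counterpart of the map dual to the linear part of $d$ on $V^{i-1}$ is precisely the cellular map $\pi_{i-1}(X^{i-1})\otimes\Q\to \pi_{i-1}(X^{i-1},X^{i-2})\otimes\Q$ whose kernel defines $\Gamma_{i-1}(X)$.

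With Part (1) in hand, the two sums defining $\rho_X$ and $\eta_X$ can be rewritten in terms of the same sequence $\{\dim\Gamma_j(X)\}$, and direct comparison using the Whitehead exact sequence on an elliptic model yields $\rho_X=\eta_X$. The identification in Part (3) then splits along the classical dichotomy for rationally elliptic spaces: in the case $\chi_H>0$ the minimal Sullivan model is pure and even-generated, so pairing the Postnikov cohomologies appearing in $\rho_X$ with the cohomology generators of $X$ yields $\rho_X=\chi_H$; in the complementary case $\chi_H=0$ (so $\chi_\pi<0$), the Quillen-side Whitehead sequence identifies $\rho_X$ with $-\chi_\pi$. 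Positivity in Part (2) is then automatic, since one of $\chi_H$ and $-\chi_\pi$ is strictly positive for any rationally elliptic $X$.

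For Part (4), 2-connectedness forces $V^{\leq 2}=0$ and $W_{\leq 1}=0$, so the Whitehead exact sequences of both models begin in a higher range. I would use the Quillen-side sequence to show that the natural map $\pi_i(X^i)\otimes\Q\to\pi_i(X^i,X^{i-1})\otimes\Q$ vanishes for every $i\geq 2$ under the 2-connectedness hypothesis, which gives $\Gamma_i(X)=\pi_i(X^i)\otimes\Q$. The main obstacle is Part (1): faithfully translating the algebraic group $H^{i}(\Lambda V^{\leq i-2},d)$ into the cellular invariant $\Gamma_{i-1}(X)$ requires carefully matching the two Whitehead sequences through the duality between the Sullivan and Quillen models, and the remaining parts then become Euler-characteristic bookkeeping combined with the structural properties of rationally elliptic models.
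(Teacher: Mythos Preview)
Your outline has several genuine gaps.

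\textbf{Part (1).} Your description of $H^{i}(\Lambda V^{\leq i-2})$ as ``the appropriate quotient of $V^{i-1}$'' is incorrect. The group $L^{i}=H^{i}(\Lambda V^{\leq i-2})$ consists of decomposable cocycles in $(\Lambda^{\geq 2}V^{\leq i-2})^{i}$ modulo coboundaries coming from $(\Lambda V^{\leq i-2})^{i-1}$; the differential on $V^{i-1}$ plays no role here, since $V^{i-1}$ is not even contained in the sub-algebra $\Lambda V^{\leq i-2}$. For instance, for $\mathbb{C}P^{n}$ one has $V^{3}=0$ yet $L^{4}=\Q\cdot x^{2}\neq 0$. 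The paper does not compute $L^{i}$ directly at all: it dualises the Quillen Whitehead sequence, observes that two of the three terms and the connecting maps agree with those of the Sullivan Whitehead sequence (both are the dual Hurewicz map $H^{i}(X;\Q)\to\Hom(\pi_{i}(X)\otimes\Q,\Q)$), and concludes $L^{i+1}\cong\Hom(\Gamma_{i}(X),\Q)$ by a five-lemma argument.

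\textbf{Part (3).} The assertion that $\chi_{H}>0$ forces the Sullivan model to be ``pure and even-generated'' is false; already $S^{4}$ has $\chi_{H}=2$ but $V^{7}\neq 0$, and purity in general only follows from the stronger hypothesis $H^{\mathrm{odd}}(X;\Q)=0$. What actually makes the dichotomy work is the single identity $\rho_{\Lambda V}=\chi_{\Lambda V}-\chi_{V}$, obtained by taking the alternating sum of dimensions along the Whitehead exact sequence. Once this is established, the standard fact that exactly one of $\chi_{H}$, $\chi_{\pi}$ is zero gives Part~(3) and the positivity in Part~(2) simultaneously. Your proposal never states or proves this identity, and without it the case analysis you describe does not go through.

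\textbf{Part (4).} The claim that for a $2$-connected $X$ the map $\pi_{i}(X^{i})\otimes\Q\to\pi_{i}(X^{i},X^{i-1})\otimes\Q$ vanishes is false: take $X=S^{3}$, $i=3$, where this map is an isomorphism onto $\Q$. The paper's argument instead uses that $W_{1}=0$ forces $(\mathbb{L}(W_{\leq i})/\mathbb{L}(W_{\leq i-1}))_{i+1}=0$, so that the long exact sequence of the pair gives $\Gamma_{i}\cong H_{i}(\mathbb{L}(W_{\leq i-1}))$, which translates to $\Gamma_{i}(X)\cong\pi_{i}(X^{i-1})\otimes\Q$.
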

As a corollary of theorem 1, we deduce the following result
\begin{introcorollary} Let  $X$ be a rationally elliptic CW-complex $X$. We have the following
	\begin{enumerate}
\item If  $H^{2i}(X^{[2i-2]}; \Q)= 0$ for every $i\geq 2$, then    $X$ is  rationally an odd-dimensional sphere.
\item If  $X$ is \text{\rm{2}}-connected and $ \pi_{2i-1}(X^{2i-2})\otimes\Q=0$ for every $i\geq 1$, then  $X$ is    rationally an odd-dimensional sphere.
\end{enumerate}
\end{introcorollary}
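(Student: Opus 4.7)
My plan is to reduce both parts to Theorem~1. For (1), I would substitute the hypothesis into the defining expression for $\rho_X$ and exploit the positivity from Theorem~1(2); for (2), I would show that the given hypothesis is strictly stronger than that of (1). Substituting the hypothesis into
\[
\rho_X \;=\; 1 + \sum_{i\ge 2}(-1)^i\dim H^i(X^{[i-2]};\Q)
\]
wipes out every even-indexed contribution ($i=2$ is automatically zero since $X^{[0]}$ is a point, and $i=2k$ for $k\ge 2$ vanishes by hypothesis). Only negative odd-indexed contributions survive, so $\rho_X\le 1$; Theorem~1(2) then forces $\rho_X=1$, and each remaining summand must also vanish, giving $H^i(X^{[i-2]};\Q)=0$ for every $i\ge 2$. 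By Theorem~1(1) this is equivalent to $\Gamma_j(X)=0$ for every $j\ge 2$.

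Theorem~1(3) now leaves only $\chi_H=1$ or $\chi_\pi=-1$. I would rule out the first alternative for nontrivial $X$ via Halperin's structure of positively elliptic spaces: $\chi_H>0$ forces $\chi_\pi=0$ and $H^{\mathrm{odd}}(X;\Q)=0$, so $\chi_H\ge\dim H^0=1$ with equality iff $X$ is rationally a point. Hence $\chi_\pi=-1$ and $\chi_H=0$, which gives $\dim\pi_{\mathrm{odd}}(X)\otimes\Q=\dim\pi_{\mathrm{even}}(X)\otimes\Q+1$. It then remains to show that the Sullivan minimal model $(\Lambda V,d)$ of $X$ has $V^{\mathrm{even}}=0$: for if $2k$ were the smallest even degree with $V^{2k}\ne 0$, then the strong vanishing $H^j(X^{[j-2]};\Q)=0$ imposed on successive Postnikov stages (whose minimal models $\Lambda V_{\le j-2}$ involve only odd generators below $2k$ and, by minimality, differentials decomposable in those generators) together with ellipticity should yield a contradiction. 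With $V=V^{\mathrm{odd}}$ and $\chi_\pi=-1$ we obtain $V=\Q\,y$ with $|y|$ odd, and therefore $X\simeq_\Q S^{|y|}$.

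For (2), I would use the long exact sequence of the pair $(X^{2i-1},X^{2i-2})$ to identify $\Gamma_{2i-1}(X)$ with the image of $\pi_{2i-1}(X^{2i-2})\otimes\Q\to\pi_{2i-1}(X^{2i-1})\otimes\Q$. The source vanishes by hypothesis, so $\Gamma_{2i-1}(X)=0$ for $i\ge 2$; Theorem~1(1) (with the 2-connected assumption enabling a clean use of Theorem~1(4)) then delivers $H^{2i}(X^{[2i-2]};\Q)=0$ for every $i\ge 2$, which is exactly the hypothesis of (1). Invoking part (1) finishes the argument.

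The main obstacle I foresee is the final step of (1), ruling out even-degree generators in $V$. One must extract from the global vanishing $H^j(X^{[j-2]};\Q)=0$, together with $\chi_\pi=-1$ and ellipticity, structural information on the minimal model strong enough to pin down $V=\Q\,y$ with $|y|$ odd as the only possibility. This requires a careful degree-by-degree analysis of $\Lambda V_{\le n}$ and the supply of odd generators it can accommodate before its cohomology at the next Postnikov level becomes too large.
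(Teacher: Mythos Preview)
Your reduction of (2) to (1) is essentially the paper's argument: the long exact sequence of the pair $(X^{2i-1},X^{2i-2})$ identifies $\Gamma_{2i-1}(X)$ with the image of $\pi_{2i-1}(X^{2i-2})\otimes\Q$, and then Theorem~1(1) converts $\Gamma_{2i-1}(X)=0$ into $H^{2i}(X^{[2i-2]};\Q)=0$. So part (2) is fine.

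For part (1), however, there is a genuine gap, and it is precisely the one you flag. You correctly reach $H^{i}(X^{[i-2]};\Q)=0$ for all $i$, equivalently $\Gamma_j(X)=0$ for all $j\ge 2$. But from there your route through the dichotomy $\chi_H=1$ versus $\chi_\pi=-1$ leaves you with the task of showing $V^{\text{even}}=0$ in the Sullivan model, and your sketch for this (``degree-by-degree analysis of $\Lambda V_{\le n}$'') is not a proof. The difficulty is real: nothing in the numerical information $\chi_\pi=-1$ alone prevents even-degree generators; you would need to exploit the full strength of $L^i=0$ structurally, and you have not done so.

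The paper avoids this entirely by switching to the Quillen model at exactly this point. Once $\Gamma_i=0$ for all $i$, the map $b_{i+1}\colon W_{i+1}\to\Gamma_i$ is zero; since $b_{i+1}(w)=[\delta(w)]$, every $\delta(w)$ is a boundary in $\mathbb{L}(W_{\le i})$, and after a change of generators the Quillen model becomes $(\mathbb{L}(W),0)$ with trivial differential. Ellipticity now forces $\dim_\Q\mathbb{L}(W)<\infty$, and a free graded Lie algebra over $\Q$ is finite-dimensional only when $W$ is one-dimensional and concentrated in a single even Lie degree. This gives $X\simeq_\Q S^{2k+1}$ immediately, with no case analysis on $\chi_H$ or $\chi_\pi$ and no need to rule out even Sullivan generators by hand. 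If you want to stay on the Sullivan side, a workable alternative is to note that $L^i=0$ makes the map $H^i(\Lambda V)\to V^i$ an isomorphism, so all products of positive-degree classes vanish; Poincar\'e duality for elliptic spaces then forces $\dim V\le 1$, and ellipticity rules out $|v|$ even. Either way, the missing ingredient is a structural consequence of $\Gamma_\ast=0$, not further numerics.
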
	
A special class of rationally  elliptic CW-complexes is formed by the  $F_0$-spaces. Such CW-complex is characterised by the fact that its homotopy Euler characteristics vanishes (see definition \ref{d5}). Thus, from the main theorem of this paper we show
\begin{introcorollary}If $X$ is a \text{\rm{2}}-connected $F_0$-space, then
	$$
	\pi_{2i+1}(X^{2i})\otimes \Q=H^{2i+2}(X^{[2i]}, \Q)=0 \quad, \quad \forall i \geq 1.
	$$
\end{introcorollary}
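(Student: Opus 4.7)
My approach is to derive Corollary~3 directly from Theorem~1 together with the standard dichotomy for rationally elliptic spaces and the $F_0$-condition $\chi_\pi=0$.

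First, I would observe that for an $F_0$-space $\chi_\pi=0$, and consequently (by the dichotomy recalled in the introduction) $\chi_H>0$. Part~(2) of Theorem~1 gives $\rho_X=\eta_X>0$, so the alternative $\rho_X=-\chi_\pi=0$ in part~(3) is excluded. Thus we must have the identity
\[
\rho_X \;=\; \eta_X \;=\; \chi_H.
\]
This is the key numerical input.

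Next, I would combine parts~(1) and~(4) of Theorem~1 to reinterpret the quantities appearing in the corollary. By (1), for every $i\geq 1$,
\[
H^{2i+2}(X^{[2i]};\Q)\;\cong\;\Gamma_{2i+1}(X),
\]
and by (4), applied under the 2-connected hypothesis, $\Gamma_{2i+1}(X)=\pi_{2i+1}(X^{2i+1})\otimes\Q$. Together with a short argument on the cellular pair $(X^{2i+1},X^{2i})$ (using that $\Gamma_{2i+1}$ is by definition the image of $\pi_{2i+1}(X^{2i})\otimes\Q\to\pi_{2i+1}(X^{2i+1})\otimes\Q$, so vanishing of the target forces vanishing of the source when the source maps in rationally), these identifications line up both quantities in the statement with a single invariant, $\Gamma_{2i+1}(X)$.

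It then remains to prove that $\Gamma_{2i+1}(X)=0$ for all $i\geq 1$. For this I would use the well-known fact that an $F_0$-space has cohomology concentrated in even degrees, so that $\chi_H=1+\sum_{j\geq 2}\dim H^{2j}(X;\Q)$ is a sum of \emph{non-negative} contributions. Writing the identity $\eta_X=\chi_H$ and using Theorem~1(4) to expand $\eta_X$ as
\[
\eta_X \;=\; 1+\sum_{k\geq 1}\dim\pi_{2k}(X^{2k})\otimes\Q\;-\;\sum_{k\geq 1}\dim\pi_{2k+1}(X^{2k+1})\otimes\Q,
\]
I would exploit the Whitehead exact sequence of the Quillen model of $X$ (which computes $\pi_*(X)\otimes\Q$ from $H_*(X;\Q)$ and the $\Gamma_*$-invariants) to show that the even-indexed $\Gamma$'s already account for $\chi_H$ via a dimension inequality, forcing each odd-indexed $\Gamma_{2i+1}(X)$ to vanish individually.

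\textbf{Main obstacle.} The delicate step is the passage from the numerical identity $\eta_X=\chi_H$ to the individual vanishing of each $\Gamma_{2i+1}(X)$. A bare alternating-sum argument only gives a cancellation, not term-by-term vanishing, so one must feed in a one-sided inequality coming from the Whitehead exact sequence of the Quillen model — essentially that the odd contribution is bounded above by zero once the even cohomology is fixed. This is where the $F_0$-hypothesis is used in its sharpest form (pure Sullivan model with even-degree generators), and it is also where the 2-connectedness (which makes $\Gamma_i = \pi_i(X^i)\otimes\Q$) is essential.
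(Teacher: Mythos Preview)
Your route through the numerical identity $\rho_X=\eta_X=\chi_H$ is quite different from what the paper does, and it has a genuine gap exactly where you yourself flag it. The equality $\eta_X=\chi_H$ is a single linear relation among all the $\dim\Gamma_i(X)$; it cannot force each odd-indexed term to vanish unless you supply an independent matching bound (for instance $1+\dim\Gamma_{\text{even}}(X)\ge\chi_H$ proved by other means), and you do not. Your appeal to ``a one-sided inequality coming from the Whitehead exact sequence of the Quillen model'' is not an argument: exactness of that sequence produces identities among dimensions, not one-sided estimates of the shape you need. Since the entire content of the corollary rests on this step, the proposal is incomplete.

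The paper bypasses the Euler-characteristic machinery altogether. It works directly with the Whitehead exact sequence of the \emph{Sullivan} model and uses the structural characterization of an $F_0$-space (Proposition~\ref{t13}): $(\Lambda V,\partial)$ is pure, so $\partial(V^{\text{even}})=0$ and hence each map $b^{2i}\colon V^{2i}\to L^{2i+1}$ is zero; and $H^{\text{odd}}(\Lambda V)=0$. Feeding these two facts into the exact segment
\[
V^{2i}\xrightarrow{\ b^{2i}\ }L^{2i+1}\longrightarrow H^{2i+1}(\Lambda V)
\]
kills the relevant $L$-terms in one stroke, for every $i$; the translation to $\Gamma_*(X)$, to $H^{*}(X^{[\,\cdot\,]};\Q)$, and (under $2$-connectedness) to $\pi_*(X^{\,\cdot\,})\otimes\Q$ is then just the dictionary of Proposition~\ref{p3} and Corollary~\ref{c2}. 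Ironically, the ``pure Sullivan model'' input you invoke only at the very end as a possible rescue is, in the paper, the whole proof.

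One smaller issue: your passage from $\Gamma_{2i+1}(X)$ to $\pi_{2i+1}(X^{2i})\otimes\Q$ is not correct as written. You say $\Gamma_{2i+1}(X)$ is the image of $\pi_{2i+1}(X^{2i})\otimes\Q\to\pi_{2i+1}(X^{2i+1})\otimes\Q$ and that ``vanishing of the target forces vanishing of the source'', but an image being zero does not kill its domain. Under the $2$-connected hypothesis the paper instead identifies $\Gamma_i(X)\cong\pi_i(X^{i-1})\otimes\Q$ directly via the Quillen model of the skeleton (Remark~\ref{r2}, Corollary~\ref{c2}(3)), which gives the $\pi_{2i+1}(X^{2i})\otimes\Q$ statement immediately once the $\Gamma$-vanishing is established.
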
	
 We show our results using standard tools of rational homotopy theory by working algebraically on the models of Quillen and Sullivan of $X$. We refer to \cite{B4} for a general introduction to these techniques. Recall that every simply connected space of finite type has a corresponding differential commutative cochain algebra called the Sullivan model of $X$, unique up to isomorphism, that encodes the rational homotopy of $X$. Dually, every simply connected space $X$ has a differential graded Lie algebra (DGL for short), called the Quillen model of $X$ and unique up to isomorphism, which determines completely the rational homotopy type of $X$.

\medskip 
The paper is organised as follows. In section 2, we recall the definitions of the Whitehead exact sequences associated with the Quillen and the Sullivan models of an rationally elliptic CW-complex as well as we formulate and prove all the results in an algebraic setting. In section 3, we give a mere transcription of the above results in the topological context.
\section{Whitehead exact sequences in rational homotopy theory}
\subsection{Whitehead exact sequence associated with a DGL } Let $W=(W_{\geq 1})$ be a finite dimensional graded vector space over $\Q$ and let $(\mathbb{L}(W), \delta)$ be a DGL.  We define the linear maps
\begin{equation}\label{01}
	j_{i}: H_{i}(\mathbb{L}(W_{\leq i})) \longrightarrow W_{i} \quad, \quad b_{i}: W_{i} \longrightarrow H_{i-1}(\mathbb{L}(W_{\leq i-1})),
\end{equation}
by setting $j_{i}([w+y])=w$ and $b_{i}(w)=[\delta(w)]$, were $[\delta(w)]$ denotes the homology class of $\delta(w)$ in the sub-Lie algebra $\mathbb{L}_{i-1}(W_{\leq i-1}).$ Recall that if $x \in H_{i}(\mathbb{L}(W_{\leq i}))$, then $x=[w+y]$, where $w \in W_{i}, y \in \mathbb{L}_{i}(W_{\leq i-1})$ and $\delta(w+y)=0$.

To every DGL $(\mathbb{L}(W), \delta)$ we can assign (see \cite{B1,B3,B5} for more details) the following long exact sequence
\begin{equation}\label{1}
\cdots \rightarrow W_{i+1} \stackrel{b_{i+1}}{\longrightarrow} \Gamma_{i} \rightarrow H_{i}(\mathbb{L}(W)) \rightarrow W_{i} \stackrel{b_{i}}{\longrightarrow} \cdots
\end{equation}
called the Whitehead exact sequence of $(\mathbb{L}(W), \delta)$, where
\begin{equation}\label{2}
	\Gamma_{i}=\operatorname{ker}(j_{i}: H_{i}(\mathbb{L}(W_{\leq i})) \rightarrow W_{i}) \quad, \quad i \geq 2.
\end{equation}
\begin{remark}\label{r1} First, it is clear that if $H_{i}(\mathbb{L}(W_{\leq i}))=0$, then $\Gamma_{i}=0$.
	
\noindent Next, 	if $(\mathbb{L}(W), \delta)$ satisfies $W_{1}=0$, then
\begin{equation}\label{3}
\Gamma_{i}=H_{i}(\mathbb{L}(W_{\leq i-1}))\,\,\,\,\,\,\,\,\,\,\,\,\,\,\,  i \geq 2.
\end{equation}
Indeed; let us consider the following long exact sequence
$$
\cdots \rightarrow H_{i+1}(\mathbb{L}(W_{\leq i}) / \mathbb{L}(W_{\leq i-1})) \rightarrow H_{i}(\mathbb{L}(W_{\leq i-1})) \rightarrow H_{i}(\mathbb{L}(W_{\leq i})) \rightarrow \cdots
$$
which is associated with the following short exact of chain complexes
$$(\mathbb{L}(W_{\leq i-1}), \delta) \mapsto(\mathbb{L}(W_{\leq i}), \delta) \rightarrow(\mathbb{L}(W_{\leq i}) / \mathbb{L}(W_{\leq i-1}), \widetilde{\delta}),
$$
where $(\mathbb{L}(W_{\leq i}) / \mathbb{L}(W_{\leq i-1}), \widetilde{\delta})$ is the quotient chain complex and the differential $\widetilde{\delta}$ is induced by $\delta .$ Now clearly, we have $(\mathbb{L}(W_{\leq i}) / \mathbb{L}(W_{\leq i-1})) \cong W_{i}$ and as $W_{1}=0$ it follows that $H_{i+1}(\mathbb{L}(W_{\leq i}) / \mathbb{L}(W_{\leq i-1}))=0 .$ As a result we obtain (\ref{3}). 
\end{remark}
\subsection{ Whitehead exact sequence associated with a Sullivan algebra} A Sulivan algebra $(\Lambda V, \partial)$ over $\Q$ is simply connected if $V^{1}=0 .$ Let $(\Lambda(V^{\leq i-2}), \partial)$ be the sub algebra generated by the graded vector apace $V^{\leq i-2} .$ For simplicity let us write
\begin{equation}\label{4}
L^{i}=H^{i}(\Lambda(V^{\leq i-2})) \quad, \quad i \geq 2.
\end{equation}
The Whitehead exact sequence of $(\Lambda V, \partial)$ is defined as follows (see \cite{B1,B2,B4} for more details)
\begin{equation}\label{5}
\cdots \rightarrow H^{i}(\Lambda V) \rightarrow V^{i} \stackrel{b^{i}}{\longrightarrow} L^{i+1} \longrightarrow H^{i+1}(\Lambda V) \longrightarrow V^{i+1} \stackrel{b^{i+1}}{\longrightarrow} \cdots
\end{equation}
Here $b^{i}(v)=[\partial(v)]$, where $[\partial(v)]$ is the cohomology class of the cycle $\partial(v)$ in $\Lambda(V^{\leq i-2})$.

\subsection{Elliptic algebras} A Sullivan algebra $(\Lambda V, \partial)$ is called elliptic if  
$$\operatorname{dim} H^{*}(\Lambda V)=\sum_{i\geq 0}^{} \operatorname{dim}H^{i}(\Lambda V)<\infty\,\,\,\,\,\,\,\,\,\,\text{ and }\,\,\,\,\,\,\,\,\,\,\operatorname{dim} V=\sum_{i\geq 2}^{} \operatorname{dim}V^{i}<\infty.$$
Let us call $n=\max \left\{i: H^{i}(\Lambda V) \neq 0\right\}$ the formal dimension of $(\Lambda V, \partial)$. For an elliptic Sullivan algebra $(\Lambda V, \partial)$, we define the following  two numbers
\begin{eqnarray} 
\chi_{\Lambda V}&=&\sum_{i\geq 0}^{} \operatorname{dim}H^{2i}(\Lambda V)-\sum_{i\geq 0}^{} \operatorname{dim}H^{2i+1}(\Lambda V)=H^{\rm{even}}(\Lambda V)-H^{\rm{odd}}(\Lambda V),\nonumber\\
\chi_{V}&=&\sum_{i\geq 1}^{} \operatorname{dim}V^{2i}-\sum_{i\geq 1}^{} \operatorname{dim}V^{2i+1}=\operatorname{dim}V^{\rm{even}}-\operatorname{dim}V^{\rm{odd}}.\nonumber
\end{eqnarray}
The following are some important properties of elliptic Sullivan algebras.

\begin{theorem}\label{t1} \rm{([4], \S 32)} Suppose $(\Lambda V, \partial)$ is  elliptic of formal dimension $n .$ Then
\begin{enumerate}
\item $\operatorname{dim} V^{\text {odd }} \geq \operatorname{dim} V^{\text {even }};$
\item $V^{i}=0$, for $i \geq 2 n$;
\item $V^{i}=0$, for $i>n$ and $i$ even;
\item There is only one non-trivial $V^{i}$, for $i>n$ and $i$ odd. Necessary, we have $\operatorname{dim} V^{i}=1;$
\item $\chi_{\Lambda V} \geq 0$ and $\chi_{V} \leq 0 .$ Moreover $\chi_{\Lambda V}=0 \Longleftrightarrow \chi_{V}<0.$
\end{enumerate}
\end{theorem}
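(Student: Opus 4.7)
The plan is to reduce everything to the case of a \emph{pure} elliptic Sullivan algebra, i.e.\ one in which $\partial(V^{\mathrm{even}})=0$ and $\partial(V^{\mathrm{odd}})\subset \Lambda V^{\mathrm{even}}$. Any $(\Lambda V,\partial)$ admits such an associated graded $(\Lambda V,\partial_{\sigma})$ by filtering via odd-word length; the resulting spectral sequence converges and preserves ellipticity, the formal dimension $n$, as well as both $\chi_{\Lambda V}$ and $\chi_{V}$, so it suffices to establish (1)--(5) for pure elliptic models.

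In the pure setting, I would write $A=\Lambda V^{\mathrm{even}}$ (a polynomial ring in $\dim V^{\mathrm{even}}$ variables) and let $I\subset A$ be the ideal generated by $\partial_{\sigma}(V^{\mathrm{odd}})$. The decomposition $\Lambda V\cong A\otimes \Lambda V^{\mathrm{odd}}$ exhibits $(\Lambda V,\partial_{\sigma})$ as a Koszul-type complex, and ellipticity forces $A/I$ to be finite-dimensional over $\Q$. Since a polynomial ring in $k$ variables admits no finite-dimensional quotient unless one divides by at least $k$ elements forming a system of parameters, this immediately gives $\dim V^{\mathrm{odd}}\geq \dim V^{\mathrm{even}}$, which is (1).

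For (2)--(4), I would combine the explicit formula for the formal dimension of a pure elliptic algebra,
$$
n=\sum_{i\geq 1}(2i+1)\dim V^{2i+1}-\sum_{i\geq 1}(2i-1)\dim V^{2i},
$$
with a degree analysis of $A/I$. Any even generator in degree $i>n$ would produce a nonzero class in $A/I$ above degree $n$, contradicting the vanishing of $H^{*}$ above degree $n$, which gives (3). A careful accounting of which odd generators can pair with the top classes of $A/I$, combined with the above formula, then yields (2) and (4): any odd generator above degree $n$ must be unique, one-dimensional, and of degree at most $2n-1$.

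Statement (5) is the heart of the theorem and hinges on Poincar\'e duality $H^{i}(\Lambda V)\cong H^{n-i}(\Lambda V)$ for elliptic Sullivan algebras. When $\dim V^{\mathrm{odd}}=\dim V^{\mathrm{even}}$, the top Koszul-type class lies in an even total degree so $n$ is even, duality pairs Betti numbers of equal parity, and one reads off $\chi_{\Lambda V}>0$ and $\chi_{V}=0$. When $\dim V^{\mathrm{odd}}>\dim V^{\mathrm{even}}$, the top class lies in odd total degree so $n$ is odd, duality swaps even with odd Betti numbers, and one reads off $\chi_{\Lambda V}=0$ and $\chi_{V}<0$. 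I expect the main obstacle to be the rigorous establishment of Poincar\'e duality from ellipticity together with the precise parity link between $n$ and the numerical balance of $V^{\mathrm{even}}$ versus $V^{\mathrm{odd}}$; pinning down the top class of the Koszul complex and reconciling it with the sign of $\chi_V$ is the delicate combinatorial core of the pure-model analysis, and is where any honest proof will spend most of its effort.
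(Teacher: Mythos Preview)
The paper does not give a proof of this theorem at all: it is quoted verbatim from \S 32 of F\'elix--Halperin--Thomas (reference~[4]) and is used as a black box throughout. There is therefore no ``paper's own proof'' to compare your proposal against.

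That said, your sketch is essentially the standard argument from~[4]. The reduction to the pure case via the odd-word-length filtration (the ``odd spectral sequence''), the Koszul-complex interpretation giving $\dim V^{\mathrm{odd}}\ge \dim V^{\mathrm{even}}$, and the Poincar\'e-duality dichotomy for~(5) are all exactly how F\'elix--Halperin--Thomas proceed. One point worth sharpening: items~(2)--(4) are most cleanly obtained not from the formal-dimension formula you wrote, but from the Friedlander--Halperin inequalities $\sum_j |y_j|\le 2n-1$ (sum over odd generators) and $\sum_i |x_i|\le n$ (sum over even generators). The first of these immediately rules out two odd generators of degree $>n$ (since their degrees alone would exceed $2n-1$) and forces any single one to be one-dimensional, which is exactly statement~(4); statement~(2) is the special case $|y_j|\le 2n-1$, and statement~(3) is $|x_i|\le n$. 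Your ``careful accounting'' paragraph would benefit from invoking these inequalities explicitly rather than the formal-dimension identity.
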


\begin{proposition}\label{p1} Suppose $(\Lambda V, \partial)$ is elliptic of formal dimension $n.$ 
\begin{enumerate}
\item For every even number i such that $i>n+1$, we have $L^{i}=0$;
\item For every odd number i such that i $>n+1$ we have $L^{i} \cong V^{i-1}$;
\item For every i such that $i> 2n$, we have $L^{i}=0$.
\end{enumerate}
\end{proposition}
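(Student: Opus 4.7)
The plan is to feed the vanishing results of Theorem~\ref{t1} into the Whitehead exact sequence~(\ref{5}) of $(\Lambda V,\partial)$ at the index of interest. Extracting the five-term piece
\[
H^{i-1}(\Lambda V)\;\longrightarrow\;V^{i-1}\;\xrightarrow{\;b^{i-1}\;}\;L^{i}\;\longrightarrow\;H^{i}(\Lambda V)\;\longrightarrow\;V^{i}
\]
and using that $H^{j}(\Lambda V)=0$ for every $j>n$, one sees that as soon as $i>n+1$ both $H^{i-1}(\Lambda V)$ and $H^{i}(\Lambda V)$ vanish; the sequence therefore collapses to an isomorphism $b^{i-1}\colon V^{i-1}\xrightarrow{\cong} L^{i}$. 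This is the key technical identity from which all three assertions will flow.

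For parts~(1) and~(2), the isomorphism $L^{i}\cong V^{i-1}$ reduces each claim to a computation of $V^{i-1}$, which is controlled by Theorem~\ref{t1}. For one parity of $i-1$, item~(3) of Theorem~\ref{t1} kills $V^{i-1}$ outright, forcing $L^{i}=0$; for the opposite parity, item~(4) pins down $V^{i-1}$ as the at-most-one-dimensional exceptional summand allowed in that range, yielding the stated isomorphism. For part~(3), combine the same identity with item~(2) of Theorem~\ref{t1}: if $i>2n$ then $i-1\geq 2n$, so $V^{i-1}=0$ and consequently $L^{i}=0$.

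The main substantive content is the observation that the Whitehead exact sequence pinches down onto $b^{i-1}$ beyond the formal dimension; the rest is a direct reading of the ellipticity constraints recorded in Theorem~\ref{t1}, with the only delicate point being the parity bookkeeping when invoking items~(3) and~(4). It is worth noting that the strict inequality $i>n+1$ is genuinely needed: the boundary case $i=n+1$ fails because $H^{n}(\Lambda V)$ can be nontrivial, so $b^{n}\colon V^{n}\to L^{n+1}$ is in general only surjective and not necessarily injective, which is precisely why the dichotomy in~(1)--(2) only begins one degree past the formal dimension.
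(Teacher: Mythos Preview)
Your approach is exactly the paper's: write out the Whitehead exact sequence beyond degree $n$, use $H^{j}(\Lambda V)=0$ for $j>n$ to collapse it to isomorphisms $b^{i-1}\colon V^{i-1}\xrightarrow{\cong}L^{i}$ for $i>n+1$, and then read off the conclusions from Theorem~\ref{t1}. The paper is in fact terser than you are; after displaying the truncated sequence it simply says the three assertions ``are easily derived.''

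One warning about the step you yourself flagged as delicate. If you actually carry out the parity bookkeeping, it does not close for part~(1) as stated: when $i$ is even, $i-1$ is odd, so item~(3) of Theorem~\ref{t1} (which concerns $V^{\text{even}}$) does not apply, and item~(4) allows one nontrivial $V^{i-1}$ in that range. Concretely, for $S^{2}$ one has $n=2$ and $L^{4}=H^{4}(\Lambda(x_{2}))\cong\Q$, contradicting~(1). The identity $L^{i}\cong V^{i-1}$ together with Theorem~\ref{t1}(3),(4) actually yields $L^{i}=0$ for \emph{odd} $i>n+1$ and $L^{i}\cong V^{i-1}$ (possibly one-dimensional in a single degree) for \emph{even} $i>n+1$; the parities in~(1) and~(2) appear to be transposed in the statement. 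The paper's own proof glosses over this just as you do, and its later use of the proposition (in the proof of Proposition~\ref{p2}, where the tail sum is bounded by $\pm 1$ rather than shown to vanish) is consistent only with the corrected parities. So your method is sound and matches the paper; the obstruction lies in the statement, not in your argument.
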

\begin{proof} As $(\Lambda V, \partial)$  has formal dimension $n$, it follows that $H^{i}(\Lambda V)=0$ for $i>n$. Therefore, by theorem \ref{t1} the  Whitehead exact sequence of $(\Lambda V, \partial)$ can be written as
$$
 H^{n}(\Lambda V)\rightarrow V^{n} \rightarrow L^{n+1} \rightarrow 0  \rightarrow \cdots\to 0 \rightarrow V^{2n-1}\to L^{2 n} \rightarrow0\rightarrow V^{2n}=0\to L^{2 n+1} \to0
$$
Consequently, the three assertions of proposition \ref{p1} are easily derived.
\end{proof}
\begin{definition}\label{d1} Let $(\Lambda V, \partial)$ be an elliptic Sullivan algebra.  Using (\ref{4}), we define $$\rho_{\Lambda V}=1+\operatorname{dim} L^{\text {even }}-\operatorname{dim} L^{\text {odd }},$$
	 where
	$$\operatorname{dim} L^{\text {even }}=\operatorname{dim} L^4+\operatorname{dim} L^6+\operatorname{dim} L^8+\dots$$$$\operatorname{dim} L^{\text {odd }}=\operatorname{dim} L^5+\operatorname{dim} L^7+\operatorname{dim} L^9+\dots$$
	Note that from (\ref{4}), we deduce that $L^i=0$ for $i=2,3$.
\end{definition}
\begin{proposition}\label{p2}  If $(\Lambda V, \partial)$ is elliptic of formal dimension $n$, then we have
\begin{equation}\label{6}
0\leq \rho_{\Lambda V}-\sum_{i=4}^{n+1}(-1)^{i} \operatorname{dim} L^{i}\leq 2.
\end{equation}
\end{proposition}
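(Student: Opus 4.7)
The plan is to unfold the definition of $\rho_{\Lambda V}$ and reduce the statement to an estimate on the tail $\sum_{i \geq n+2}(-1)^{i}\dim L^{i}$. From the definition we have $\rho_{\Lambda V} = 1 + \sum_{i \geq 4}(-1)^{i}\dim L^{i}$, so the quantity to be bounded rewrites as
$$\rho_{\Lambda V} - \sum_{i=4}^{n+1}(-1)^{i}\dim L^{i} \;=\; 1 + \sum_{i \geq n+2}(-1)^{i}\dim L^{i}.$$
Thus everything hinges on controlling the tail sum on the right.

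The three clauses of Proposition \ref{p1}, combined with the parity restrictions on $V$ from Theorem \ref{t1}, cut this tail down essentially to nothing. First, I would invoke Proposition \ref{p1}(3) to truncate the tail to the finite range $n+2 \leq i \leq 2n$. Second, for even $i$ in this range, Proposition \ref{p1}(1) gives $L^{i} = 0$, removing those contributions. Third, for odd $i$ in the range, Proposition \ref{p1}(2) yields $L^{i} \cong V^{i-1}$; but $i-1$ is then even with $i-1 > n$, so Theorem \ref{t1}(3) forces $V^{i-1} = 0$. Every remaining term of the tail therefore vanishes, the tail equals $0$, and the quantity in question equals $1$, which certainly satisfies $0 \leq 1 \leq 2$.

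I do not anticipate a serious obstacle: the argument is an essentially mechanical combination of the exact sequence \eqref{5} with the vanishing theorems already in hand. The mild subtlety—and the reason the conclusion is phrased as a two-sided inequality rather than an equality—is the exceptional behaviour permitted by Theorem \ref{t1}(4), where one odd $V^{i_{0}}$ with $i_{0} > n$ may be nontrivial of dimension one. Through the identification of Proposition \ref{p1}(2) this could perturb a single $L^{i_{0}+1}$ by $\pm 1$; the generous range $[0,2]$ is designed to absorb such a correction without having to split into cases based on the parity of $n$ or on whether such an $i_{0}$ exists.
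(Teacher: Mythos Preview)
Your approach is essentially the same as the paper's: rewrite $\rho_{\Lambda V}-\sum_{i=4}^{n+1}(-1)^{i}\dim L^{i}$ as $1+\sum_{i\geq n+2}(-1)^{i}\dim L^{i}$, truncate the tail to $n+2\leq i\leq 2n$ via Proposition~\ref{p1}(3), and then control the surviving terms using the parity information from Theorem~\ref{t1} and Proposition~\ref{p1}. The paper is a bit coarser, invoking Theorem~\ref{t1}(3),(4) together with Proposition~\ref{p1}(2) to conclude only that the tail lies in $[-1,1]$, whereas you push further and argue the tail vanishes outright.

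One small point: your closing paragraph is slightly muddled. Having already used Proposition~\ref{p1}(1) to kill $L^{i}$ for even $i>n+1$, the exceptional odd $V^{i_{0}}$ of Theorem~\ref{t1}(4) cannot feed into any $L^{i_{0}+1}$ in your tail (that index is even, and Proposition~\ref{p1}(2) concerns odd $i$, so your citation there is off). If Proposition~\ref{p1}(1) is taken at face value, your argument already gives the exact value $1$ with no perturbation possible, and the hedge is unnecessary. The paper's looser bound, by contrast, accommodates the possibility of a single nonzero even-degree contribution without relying on Proposition~\ref{p1}(1); this is the genuine source of the width of the interval $[0,2]$.
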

\begin{proof} First by (\ref{3}) of proposition \ref{p1}, we have
$$
\rho_{\Lambda V}=1+\sum_{i=4}^{2 n}(-1)^{i} \operatorname{dim} L^{i}=1+\sum_{i=4}^{n+1}(-1)^{i} \operatorname{dim} L^{i}+\sum_{i=n+2}^{2 n}(-1)^{i} \operatorname{dim} L^{i}.
$$
Next, let us focus on the integer $\sum_{i=n+2}^{2 n}(-1)^{i} \operatorname{dim} L^{i} .$ Using  the relations (3) and (4) of theorem  \ref{t1} and  (2)  of proposition \ref{p1}, we deduce that 
$$-1\leq\sum_{i=n+2}^{2 n}(-1)^{i} \operatorname{dim} L^{i}\leq 1.$$
Therefore 
$$
\sum_{i=4}^{n+1}(-1)^{i} \operatorname{dim} L^{i}\leq \rho_{\Lambda V}\leq \sum_{i=4}^{n+1}(-1)^{i} \operatorname{dim} L^{i}+2,
$$
as wanted.
\end{proof}

The next result shows the relationship between the $\rho_{\Lambda V}, \chi_{\Lambda V}$ and $\chi_{V}.$

\begin{theorem}  If $(\Lambda V, \partial)$ is an elliptic Sullivan algebra, then  $\rho_{\Lambda V}=\chi_{\Lambda V}-\chi_{V}$.
\end{theorem}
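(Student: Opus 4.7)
The plan is to apply the standard Euler characteristic principle to the Whitehead exact sequence (\ref{5}): for any long exact sequence of finite-dimensional vector spaces, the alternating sum of dimensions vanishes. Since $(\Lambda V,\partial)$ is elliptic, \thmref{t1} guarantees $\dim H^{*}(\Lambda V)<\infty$ and $\dim V<\infty$, while \propref{p1}(3) gives $L^{i}=0$ for $i>2n$. So only finitely many terms of (\ref{5}) are nonzero and the principle applies.

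First I would set up the alternating sum. The sequence cycles through triples $(H^{i}(\Lambda V),V^{i},L^{i+1})$ indexed by $i\geq 2$, so if one assigns sign $(-1)^{i}$ to the entry $H^{i}(\Lambda V)$, the next two entries $V^{i}$ and $L^{i+1}$ carry signs $-(-1)^{i}$ and $(-1)^{i}$ respectively. Exactness then yields
\begin{equation*}
\sum_{i\geq 2}(-1)^{i}\bigl[\dim H^{i}(\Lambda V)-\dim V^{i}+\dim L^{i+1}\bigr]=0.
\end{equation*}

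Next I would translate each of the three partial sums into the invariants. Using $\dim H^{0}(\Lambda V)=1$ and $H^{1}(\Lambda V)=0$ (since $V^{1}=0$), the first piece is $\sum_{i\geq 2}(-1)^{i}\dim H^{i}(\Lambda V)=\chi_{\Lambda V}-1$. The middle piece equals $\sum_{i\geq 2}(-1)^{i}\dim V^{i}=\chi_{V}$ because $V^{1}=0$. For the last piece, reindexing $j=i+1$ and recalling $L^{2}=L^{3}=0$, one gets $\sum_{i\geq 2}(-1)^{i}\dim L^{i+1}=-\sum_{j\geq 4}(-1)^{j}\dim L^{j}=-(\rho_{\Lambda V}-1)$ by \defref{d1}. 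Assembling the three identities gives $(\chi_{\Lambda V}-1)-\chi_{V}+(1-\rho_{\Lambda V})=0$, i.e.\ $\rho_{\Lambda V}=\chi_{\Lambda V}-\chi_{V}$.

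The only genuine obstacle is the bookkeeping of the sign convention and the initial segment of the sequence. The $+1$ in the definition of $\rho_{\Lambda V}$ compensates for the absent entries $L^{2},L^{3}$, while the $+1$ hidden in $\chi_{\Lambda V}$ comes from $H^{0}(\Lambda V)=\mathbb{Q}$; these two offsets cancel in the final sum, which is precisely why the identity comes out so cleanly. Finiteness of all sums (needed to legitimise the manipulations above) is the only place where ellipticity is used in an essential way.
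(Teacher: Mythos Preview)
Your proof is correct and follows essentially the same route as the paper: both extract the identity from exactness of the Whitehead sequence~(\ref{5}). The paper carries out the alternating-sum argument by hand---splitting each $V^{i}$, $L^{i+1}$, $H^{i+1}$ into kernel/image summands and telescoping from $\dim V^{3}$---whereas you invoke the Euler-characteristic principle for long exact sequences directly, but the underlying content is identical.
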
 
\begin{proof} Let us consider the Whitehead exact sequence of $(\Lambda V, \partial)$ given in (\ref{5})
$$
\cdots \rightarrow H^{i}(\Lambda V) \rightarrow V^{i} \stackrel{b^{i}}{\longrightarrow} L^{i+1} \longrightarrow H^{i+1}(\Lambda V) \longrightarrow V^{i+1} \stackrel{b^{i+1}}{\longrightarrow} \cdots
$$
Let us write $H^{*}=H^{*}(\Lambda V)$.  For every $i \geq 2$, let
$$
V^{i, 1}=\operatorname{ker} b^{i} \quad, \quad V^{i, 2}=\operatorname{im} b^{i} \quad, \quad L^{i+1,1}=\operatorname{im} i^{i+1}.
$$
The exactness of the above sequence implies that
\begin{equation}\label{7}
V^{i} \cong V^{i, 1} \oplus V^{i, 2} \quad,\quad L^{i+1} \cong V^{i, 2} \oplus L^{i+1,1}\quad, \quad H^{i+1} \cong L^{i+1,1} \oplus V^{i+1,1}.
\end{equation}
Using the direct summands in (\ref{7}) we deduce the following
$$
\begin{aligned}
	\operatorname{dim} V^{3} &=\operatorname{dim} H^{3}+\operatorname{dim} V^{3,2} \\
	&=\operatorname{dim} H^{3}+\operatorname{dim} L^{4}-\operatorname{dim} L^{4,1} \\
	&=\operatorname{dim} H^{3}+\operatorname{dim} L^{4}-\operatorname{dim} H^{4}+\operatorname{dim} V^{4,1} \\
	&=\operatorname{dim} H^{3}+\operatorname{dim} L^{4}-\operatorname{dim} H^{4}+\operatorname{dim} V^{4}-\operatorname{dim} V^{4,2}.
\end{aligned}
$$
Iterating the above process and  taking into consideration that $(\Lambda V, \partial)$ is elliptic which implies that  this process must stop, we get  the following formula
$$
\operatorname{dim} V^{3}=-\sum_{i \geq 3}(-1)^{i} \operatorname{dim} H^{i}+\sum_{i \geq 4}(-1)^{i} \operatorname{dim} L^{i}+\sum_{i \geq 4}(-1)^{i} \operatorname{dim} V^{i},
$$
which implies
\begin{equation}\label{8}
\sum_{i \geq 3}(-1)^{i} \operatorname{dim} H^{i}-\sum_{i \geq 3}(-1)^{i} \operatorname{dim} V^{i}=\sum_{i \geq 4}(-1)^{i} \operatorname{dim} L^{i}.
\end{equation}
As $(\Lambda V, \partial)$ is simply connected, it follows that $ H^{0}\cong \Q$, $H^{1}=0$ and $H^{2} \cong V^{2}$. Therefore the relation (\ref{8}) becomes
\begin{equation*}\label{9}
	(\operatorname{dim} H^{\text {even }}-\operatorname{dim} H^{\text {odd }})-(\operatorname{dim} V^{\text {even }}-\operatorname{dim} V^{\text {odd }})=1+\sum_{i \geq 4}(-1)^{i} \operatorname{dim} L^{i},
\end{equation*}
implying $\rho_{\Lambda V}=\chi_{\Lambda V}-\chi_{V}$.
\end{proof}

\begin{corollary}\label{c1} If $(\Lambda V, \partial)$ is an elliptic Sullivan algebra, then $\rho_{\Lambda V}>0 .$ Moreover if $\chi_{\Lambda V}>0$, then $\rho_{\Lambda V}=\chi_{\Lambda V}$.
\end{corollary}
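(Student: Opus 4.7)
The plan is to invoke the identity $\rho_{\Lambda V} = \chi_{\Lambda V} - \chi_V$ just established and combine it with item (5) of \thmref{t1}, which asserts $\chi_{\Lambda V} \geq 0$, $\chi_V \leq 0$, and the equivalence $\chi_{\Lambda V} = 0 \Longleftrightarrow \chi_V < 0$. The latter in particular yields the contrapositive $\chi_{\Lambda V} > 0 \Longleftrightarrow \chi_V = 0$, which is the key dichotomy driving the argument.

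First, I would split into two cases according to the sign of $\chi_{\Lambda V}$. If $\chi_{\Lambda V} > 0$, then by the equivalence above $\chi_V = 0$, and substituting into the formula gives $\rho_{\Lambda V} = \chi_{\Lambda V} - 0 = \chi_{\Lambda V} > 0$; this simultaneously delivers the positivity statement and the second assertion of the corollary. If instead $\chi_{\Lambda V} = 0$, then the equivalence forces $\chi_V < 0$, so $\rho_{\Lambda V} = 0 - \chi_V = -\chi_V > 0$, again positive.

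Combining the two cases shows $\rho_{\Lambda V} > 0$ unconditionally, and the equality $\rho_{\Lambda V} = \chi_{\Lambda V}$ under the hypothesis $\chi_{\Lambda V} > 0$. There is essentially no obstacle here: the whole content has been packed into the previous theorem and into \thmref{t1}(5), and the corollary is really just a one-line bookkeeping consequence. The only small subtlety to be careful about is the logical direction of the equivalence in \thmref{t1}(5), i.e.\ extracting $\chi_V = 0$ from $\chi_{\Lambda V} > 0$ via its contrapositive, which I would state explicitly to keep the argument transparent.
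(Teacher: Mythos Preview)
Your argument is correct and uses the same ingredients as the paper: the identity $\rho_{\Lambda V}=\chi_{\Lambda V}-\chi_V$ together with item (5) of \thmref{t1}. The only cosmetic difference is that the paper first observes $\rho_{\Lambda V}\ge 0$ and then rules out $\rho_{\Lambda V}=0$ by contradiction, whereas you reach strict positivity directly via a case split on $\chi_{\Lambda V}$; the second assertion is handled identically.
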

\begin{proof} As $\rho_{\Lambda V}=\chi_{\Lambda V}-\chi_{V}$, the relation (5) of theorem \ref{t1} implies that $\rho_{\Lambda V} \geq 0 .$ Now if we assume that $\rho_{\Lambda V}=0$, then we derive that $\chi_{\Lambda V}=\chi_{V}$. Again by the relation (5) of theorem \ref{t1}, on one hand we have $\chi_{\Lambda V}=\chi_{V}=0$ and on the other hand we have $\chi_{V}<0$ which is impossible. Thus, $\rho_{\Lambda V}>0$.

\noindent Now if $\chi_{\Lambda V}>0$, then the relation (4) of theorem \ref{t1} implies that $\chi_{V}=0 .$ Hence $\rho_{\Lambda V}=\chi_{\Lambda V}-\chi_{V}=\chi_{\Lambda V}$.
\end{proof}
\begin{corollary}\label{c5} If $(\Lambda V, \partial)$ is an  elliptic Sullivan algebra, then $L^{\rm{even}}\geq L^{\rm{odd}}$. Moreover, if $L^{\rm{even}}= 0$, then  $\operatorname{dim} H^{*}(\Lambda V)=\operatorname{dim}V^{*} +1$.
\end{corollary}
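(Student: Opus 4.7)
My plan is to derive both parts directly from the inequality $\rho_{\Lambda V}>0$ established in Corollary \ref{c1}, combined with the explicit formula $\rho_{\Lambda V}=1+\dim L^{\text{even}}-\dim L^{\text{odd}}$ of Definition \ref{d1}.

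For the first assertion, I would simply observe that since $\rho_{\Lambda V}$ is a positive integer and all the dimensions involved are nonnegative integers, the identity $\rho_{\Lambda V}=1+\dim L^{\text{even}}-\dim L^{\text{odd}}\geq 1$ forces $\dim L^{\text{even}}\geq \dim L^{\text{odd}}$. No further rational homotopy input is needed here.

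For the second assertion, I would assume $\dim L^{\text{even}}=0$. The same formula then gives $\rho_{\Lambda V}=1-\dim L^{\text{odd}}>0$, so $\dim L^{\text{odd}}<1$, and since $\dim L^{\text{odd}}$ is a nonnegative integer, $\dim L^{\text{odd}}=0$ as well. Together with the remark from Definition \ref{d1} that $L^2=L^3=0$, this means $L^i=0$ for \emph{every} $i\geq 2$. Feeding this vanishing into the Whitehead exact sequence (\ref{5}), the relevant five-term fragment
\begin{equation*}
0=L^{i}\longrightarrow H^{i}(\Lambda V)\longrightarrow V^{i}\longrightarrow L^{i+1}=0
\end{equation*}
collapses to an isomorphism $H^{i}(\Lambda V)\cong V^{i}$ for each $i\geq 2$. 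Adding the data $H^{0}(\Lambda V)=\Q$ and $H^{1}(\Lambda V)=V^{1}=0$ (simple connectivity) and summing dimensions yields $\dim H^{*}(\Lambda V)=\dim V^{*}+1$.

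The only place where a little care is required is the boundary behaviour at $i=2,3$; this is harmless because $L^{2}$ and $L^{3}$ vanish from the definition $L^{i}=H^{i}(\Lambda(V^{\leq i-2}))$ (the relevant subalgebras being $\Q$ in the simply connected range). So no genuine obstacle arises, and the whole corollary reduces to bookkeeping around the positivity of $\rho_{\Lambda V}$.
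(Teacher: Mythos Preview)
Your proof is correct and follows essentially the same route as the paper: both arguments combine the positivity $\rho_{\Lambda V}>0$ from Corollary~\ref{c1} with the formula of Definition~\ref{d1} to obtain $\dim L^{\text{even}}\geq \dim L^{\text{odd}}$, and then use the vanishing of all $L^{i}$ together with the Whitehead exact sequence~(\ref{5}) to get $H^{i}(\Lambda V)\cong V^{i}$ for $i\geq 2$ and conclude. The only cosmetic difference is that the paper invokes the formal dimension $n$ to truncate the final sum explicitly, whereas you rely directly on the finiteness built into ellipticity.
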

\begin{proof} By combining  definition \ref{d1} and corollary \ref{c1}, we obtain 
	$$\rho_{\Lambda V}=1+\operatorname{dim} L^{\text {even }}-\operatorname{dim} L^{\text {odd }}>0\Longrightarrow L^{\rm{even}}\geq L^{\rm{odd}}.$$ 
 Thus,  if $L^{\rm{even}}= 0$, then $L^{\rm{odd}}= 0$ implying that $L^{i}= 0\,,\,\forall i\geq 2$. Finally, from the Whitehead exact sequence (\ref{5}) of $(\Lambda V, \partial)$ , we deduce that
  \begin{equation}\label{99}
  H^{i}(\Lambda V)\cong V^{i} \,\,\,\,\,\,\,\,\,,\,\,\,\,\,\,\,\,\,\,\,\forall i\geq 2. 
  \end{equation}
  Consequently, if $n$ is the formal dimension of $(\Lambda V, \partial)$, then $ V^{i}=0 \,,\,\forall i> n$ and we obtain
 \begin{eqnarray}
 \operatorname{dim} H^{*}(\Lambda V)\hspace{-2mm}&=&\hspace{-2mm}\operatorname{dim} H^{0}(\Lambda V)+\operatorname{dim} H^{1}(\Lambda V)+\operatorname{dim} H^{2}(\Lambda V)+\cdots+\operatorname{dim} H^{n}(\Lambda V)\nonumber\\
 \hspace{-2mm}&=&\hspace{-2mm} 1+0+\operatorname{dim} V^{2}+\operatorname{dim} V^{3}+\cdots+\operatorname{dim} V^{n}=1+\operatorname{dim} V^{*}\nonumber 
  \end{eqnarray}
as desired.
\end{proof}
\section{Topological applications}
Let $X$ be a simply connected CW-complex. 
%$$\operatorname{dim} H^{*}(X;\Q)=\sum_{i\geq 0}^{} \operatorname{dim}H^{i}(X;\Q)<\infty\,\,\,\,\,\,\text{ and }\,\,\,\,\,\,\operatorname{dim} \pi_{*}(X)\otimes \Q=\sum_{i\geq 2}^{} \operatorname{dim} \pi_{i}(X)\otimes \Q<\infty$$
For  every $i \geq 2$, we define the vector space
\begin{equation}\label{10}
	\Gamma_{i}(X)=\operatorname{ker}(\pi_{i}(X^{i}) \otimes \Q \longrightarrow \pi_{i}(X^{i}, X^{i-1}) \otimes \Q).
\end{equation}
Here $X^{i}$ denotes the $i^{\text {th }}$-skeleton of $X$.

Thus, if $(\mathbb{L}(W), \delta)$ denotes the Quillen model of $X$, then by virtue of the properties of this model,  we obtain  the following identifications (valid for any $i \geq 2$)
\begin{equation}\label{11}
\pi_{i}(X) \otimes \Q \cong H_{i-1}(\mathbb{L}(W)) \quad,\quad H_{i}(X; \Q) \cong W_{i-1} \quad, \quad \Gamma_{i}(X) \cong \Gamma_{i-1},
\end{equation}
where $\Gamma_{i}$ is defined in (\ref{2}). Therefore the Whitehead exact sequence (\ref{1}) of this model can be written as
\begin{equation}\label{12}
\cdots \longrightarrow H_{i+1}(X; \Q) \stackrel{b_{i+1}}{\longrightarrow} \Gamma_{i}(X) \stackrel{}{\longrightarrow} \pi_{i}(X) \otimes \Q \stackrel{h_{i}}{\longrightarrow}  H_{i}(X; \Q) \stackrel{b_{i}}{\longrightarrow} \cdots
\end{equation}
	where   $h_i$ is the Hurewicz homomorphism. 
\begin{remark}\label{r2} Firstly, since $(\mathbb{L}(W_ {\leq i-1}), \delta)$ can be chosen as the Quillen model of the skeleton $X^{i}$ (see \cite{B4}, pp. 323) implying that
	$$\pi_{i}(X^{i})\otimes \Q\cong  H_{i-1}(\mathbb{L}(W_{\leq i-1}))\,\,\,\,\,\,\,\,\,\,\,\,,\,\,\,\,\,\,\,\,\,\forall i\geq 2$$
and based on remark \ref{r1} and the identifications (\ref{11}), it follows that 
\begin{equation}\label{22}
\pi_{i}(X^{i}) \otimes \Q=0\Longrightarrow\Gamma_{i}(X)=0\,\,\,\,\,\,\,\,\,\,\,\,,\,\,\,\,\,\,\,\,\,\forall i\geq 2
	\end{equation}
	Secondly,	if the space $X$ is \text{\rm{2}}-connected, then by remark \ref{r1} we deduce that 
\begin{equation*}\label{13}
	\Gamma_{i} \cong H_{i}(\mathbb{L}(W _ {\leq i-1})) \cong \pi_{i+1}(X^{i}) \otimes \Q \quad, \quad i \geq 2,
\end{equation*}
and the Whitehead exact sequence (\ref{12}) becomes
$$
\cdots \longrightarrow H_{i+1}(X; \Q) \stackrel{b_{i+1}}{\longrightarrow} \pi_{i}(X^{i-1}) \otimes \Q \stackrel{i_{i}}{\longrightarrow} \pi_{i}(X) \otimes \Q \longrightarrow H_{i}(X; \Q) \stackrel{b_{i}}{\longrightarrow} \cdots
$$
\end{remark}
Dually, if $(\Lambda V, \partial)$ is the Sullivan model of $X$, then we have
\begin{equation}\label{14}
H^{i}(X; \Q) \cong H^{i}(\Lambda V)\,\,\,\,\,\,\,\,\,\,\,\,\,\,\,\,,\,\,\,\,\,\,\,\,\,\,\, H^{i+1}(X^{[i-1]}; \Q) \cong H^{i+1}(\Lambda V^{\leq i-1}),$$
$$ V^{i} \cong \operatorname{Hom}(\pi_{i}(X) \otimes \Q; \Q),
\end{equation}
where $X^{[i]}$ denotes the $i^{\text {th }}$ Postikov section of $X .$ Therefore,  the Whitehead exact sequence of this model can be written as
\begin{equation}\label{15}
\cdots \rightarrow \operatorname{Hom}(\pi_{i}(X) \otimes \Q, \Q) \overset{b^{i}}{\rightarrow} H^{i+1}(X^{[i-1]}; \Q) \rightarrow H^{i+1}(X; \Q) \rightarrow \operatorname{Hom}(\pi_{i+1}(X) \otimes \Q, \Q) \rightarrow \cdots
\end{equation}
Recall that  a  simply connected CW-complex $X$ is  rationally  elliptic if 
	$$\operatorname{dim} H^{*}(X;\Q)=\sum_{i\geq 0}^{} \operatorname{dim}H^{i}(X;\Q)<\infty\,\,\,\text{ and }\,\,\,\,\operatorname{dim} \pi_{*}(X)\otimes \Q=\sum_{i\geq 2}^{} \operatorname{dim} \pi_{i}(X)\otimes \Q<\infty$$
Note that,  by virtue of the Sullivan model, $X$ is   rationally  elliptic if and only if its Sullivan model is elliptic. In this case the formal dimension of $X$ is the formal dimension of its Sullivan model.
\begin{definition}\label{d3} Let  $X$ be a rationally  elliptic CW-complex.  We define
$$\eta_{X}=1+\dim\,\Gamma_{\rm {even }}(X)-\dim \,\Gamma_{\rm{odd }}(X)$$
where $\Gamma_{*}(X)$ is defined in (\ref{10})
\end{definition}
\begin{definition}\label{d4} If $X$ is a  rationally  elliptic CW-complex and $(\Lambda V, \partial)$ its Sullivan model, then we define $\rho_{X}=\rho_{\Lambda V}$, where $\rho_{\Lambda V}$ is given  in definition \ref{d1}.
\end{definition}
Subsequently,  we need the following proposition showing that  $H^{i+1}(X^{[i-1]}, \Q)$ and $\Gamma_{i}(X)$ are isomorphic,  as vector spaces, for every $i\geq 2$. 
\begin{proposition}\label{p3} If $X$ is a rationally  elliptic CW-complex, then
\begin{equation}\label{16}
H^{i+1}(X^{[i-1]}; \Q) \cong \text{\rm{Hom}}(\Gamma_{i}(X), \Q), \quad i \geq 2.
\end{equation}
\end{proposition}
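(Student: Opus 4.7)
\emph{Plan.} The strategy is to reduce the proposition to a dimension comparison between the Sullivan Whitehead sequence and the dualization of the Quillen Whitehead sequence, exploiting that $\Hom(-,\Q)$ is exact over a field.

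First, the identifications (\ref{14}) together with the definition (\ref{4}) give $H^{i+1}(X^{[i-1]};\Q)\cong L^{i+1}$, while (\ref{11}) gives $\Gamma_{i}(X)\cong\Gamma_{i-1}$. So the proposition is equivalent to the purely algebraic statement $L^{i+1}\cong\Hom(\Gamma_{i-1},\Q)$, formulated entirely in terms of the two Whitehead sequences.

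Second, I would apply $\Hom(-,\Q)$ to the topological form (\ref{12}) of the Quillen Whitehead sequence, and use universal coefficients $\Hom(H_k(X;\Q),\Q)\cong H^k(X;\Q)$ together with $V^k\cong\Hom(\pi_k(X)\otimes\Q,\Q)$ from (\ref{14}), to obtain an exact sequence
\begin{equation*}
\cdots\to H^i(X;\Q)\to V^i\to\Hom(\Gamma_i(X),\Q)\to H^{i+1}(X;\Q)\to V^{i+1}\to\cdots
\end{equation*}
whose outer arrows $H^k(X;\Q)\to V^k$ are the duals of the rational Hurewicz homomorphisms $h_k$. The Sullivan Whitehead sequence (\ref{15}) has the very same outer terms, with middle entry $L^{i+1}$; and its outer arrows $H^k(X;\Q)\to V^k$ are also the duals of the Hurewicz maps, by the standard identification of the linear-part map on a Sullivan cocycle with $h_k^{\vee}$.

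From the exactness of both sequences and the coincidence of the outer maps, each middle term fits into an identical short exact sequence
\begin{equation*}
0\to\coker\bigl(H^i(X;\Q)\to V^i\bigr)\to M\to\ker\bigl(H^{i+1}(X;\Q)\to V^{i+1}\bigr)\to 0.
\end{equation*}
Hence $\dim L^{i+1}=\dim\Hom(\Gamma_i(X),\Q)$; since $X$ is rationally elliptic, both are finite-dimensional $\Q$-vector spaces, and the required isomorphism follows.

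The main obstacle is verifying that the connecting arrows $H^k(X;\Q)\to V^k$ in the two sequences genuinely coincide. This is the Quillen-Sullivan compatibility between the linear-part map on Sullivan cocycles and the projection $\mathbb{L}(W)\to W$ picking out the cycle representatives in Quillen's sequence: both descend to the dual Hurewicz map after applying the identifications $V^k\cong\Hom(\pi_k(X)\otimes\Q,\Q)$ and $W_{k-1}\cong H_k(X;\Q)$. Carefully unwinding this identification under the chosen duality is the only delicate point of the argument.
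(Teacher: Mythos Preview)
Your proposal is correct and follows essentially the same approach as the paper: dualize the Quillen Whitehead sequence (\ref{12}) via the exact functor $\Hom(-,\Q)$, identify the outer terms with those of the Sullivan Whitehead sequence (\ref{15}) using universal coefficients and (\ref{14}), observe that the outer maps $H^k(X;\Q)\to V^k$ agree in both sequences (as duals of the Hurewicz map), and conclude the middle terms have equal finite dimension by ellipticity. The paper's proof is terser---it simply asserts the coincidence of the outer maps as a bullet point---whereas you correctly flag this as the one point requiring care.
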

\begin{proof} Applying the exact functor $\text{\rm{Hom}}(.,\Q)$ to the exact sequence (\ref{12}) we obtain
\begin{equation}\label{17}
\cdots \leftarrow H^{i+1}(X; \Q) \leftarrow \text{\rm{Hom}}(\Gamma_{i}(X), \Q) \leftarrow \text{\rm{Hom}}(\pi_{i}(X) \otimes \Q, \Q) \leftarrow H^{i}(X; \Q) \leftarrow \cdots
\end{equation}
Taking in account that

\begin{itemize}
	\item All the vector spaces involved are finite dimensional.
	\item The two maps $H^{i}(X; \Q) \rightarrow\text{\rm{Hom}}(\pi_{i}(X) \otimes \Q, \Q)$ appearing in (\ref{15}) and (\ref{17}) are the same morphism for all $i \geq 2$.
		\item  $\text{\rm{Hom}}(H_{*}(X; \Q); \Q)\cong H^{*}(X; \Q)$.
\end{itemize}
and by comparing the sequences $(\ref{15}),(\ref{17})$, we get $(\ref{16})$.
\end{proof}
\begin{remark}\label{r3} By virtue of the properties of the Sullivan model   $(\Lambda V, \partial)$ of the space $X$, it is important to mention that for  all $i \geq 2$, the linear map 
	$$ b^{i}: V^{i} \longrightarrow H^{i+1}(\Lambda(V ^{\leq i-1})),$$ induced by the differential $\partial$, is the dual of the $i$-invariant
$$
k_{i} \in H^{i+1}(X^{[i-1]}; \pi_{i}(X))=\text{\rm{Hom}}(H_{i+1}(X^{[i-1]}), \pi_{i}(X)),
$$
which is the dual of the map $\Gamma_{i}(X) \overset{i_{i}}{\longrightarrow} \pi_{i}(X) \otimes \Q$ given in (\ref{12}).
\end{remark}
A mere transcription into the topological context of the above results obtained in section 2 provides the following applications.
\begin{corollary}\label{c2} If $X$ is a rationally  elliptic CW-complex of formal dimension $n$, then
\begin{enumerate}
\item For every even number i such that $i>n+1$, we have
$$
H^{i}(X^{[i-2]}, \Q)=\Gamma_{i-1}(X)=0.$$
\item For every i such that $i>2 n$, we have $H^{i}(X^{[i-2]}, \Q)=\Gamma_{i-1}(X)=0$.
\item  If $X$ is \text{\rm{2}}-connected, then  $\Gamma_{i}(X)\cong \pi_{i}(X^{i-1})\otimes\Q $ for every $i\geq 2$.
\end{enumerate}
\end{corollary}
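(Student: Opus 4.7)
The plan is that each of the three assertions is a direct translation of an already-proved algebraic fact through the dictionaries set up in Section~3. Once those dictionaries are lined up, no further calculation is needed.

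First I would record the two key identifications. From (\ref{14}) with indices shifted by one, $H^{i}(X^{[i-2]};\Q)\cong H^{i}(\Lambda V^{\leq i-2})=L^{i}$ in the notation of (\ref{4}). From Proposition~\ref{p3}, with the same index shift, $H^{i}(X^{[i-2]};\Q)\cong\Hom(\Gamma_{i-1}(X),\Q)$. Since $X$ is rationally elliptic, all the spaces involved are finite dimensional, so the vanishing of any one of $L^{i}$, $H^{i}(X^{[i-2]};\Q)$, and $\Gamma_{i-1}(X)$ forces the vanishing of the other two.

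Parts (1) and (2) then follow immediately from Proposition~\ref{p1}: part (1) of that proposition gives $L^{i}=0$ whenever $i>n+1$ is even, and part (3) gives $L^{i}=0$ whenever $i>2n$. Applying the dictionary above yields exactly the two claimed equalities.

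For (3), the 2-connectedness of $X$ together with the rational Hurewicz theorem and the identification $H_{2}(X;\Q)\cong W_{1}$ from (\ref{11}) forces $W_{1}=0$ in the Quillen model $(\mathbb{L}(W),\delta)$. Hence Remark~\ref{r1} applies and gives $\Gamma_{i-1}\cong H_{i-1}(\mathbb{L}(W_{\leq i-2}))$. Because $(\mathbb{L}(W_{\leq i-2}),\delta)$ is a Quillen model for the skeleton $X^{i-1}$, this last homology space is $\pi_{i}(X^{i-1})\otimes\Q$; combining with $\Gamma_{i}(X)\cong\Gamma_{i-1}$ from (\ref{11}) yields the desired isomorphism for every $i\geq 2$. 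The only real pitfall is bookkeeping: the paper juggles two indexing conventions simultaneously (the algebraic $\Gamma_{i-1}$ and $L^{i}$ versus the topological $\Gamma_{i}(X)$ and $H^{i}(X^{[i-2]};\Q)$), so an off-by-one slip is the main risk. The substantive input — the vanishing bounds on $L^{i}$ — has already been established in Proposition~\ref{p1}, so nothing genuinely new is required here.
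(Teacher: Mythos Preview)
Your proof is correct and follows exactly the approach the paper intends: the paper's own proof is the single line ``It follows from remark~\ref{r1}, propositions~\ref{p1} and~\ref{p3},'' and you have simply unpacked how those three ingredients combine via the identifications (\ref{4}), (\ref{11}), and (\ref{14}). The only remark worth adding is that your caution about indexing is well placed---in particular, part~(3) at $i=2$ is vacuous since $\Gamma_2(X)=0=\pi_2(X^1)\otimes\Q$ for a $2$-connected $X$---but nothing in your argument needs repair.
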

\begin{proof}  It follows from remark \ref{r1},    propositions \ref{p1} and \ref{p3}.
\end{proof}

The next result establishes that the two numerical invariants $\eta_{X}$ and $\rho_{Y}$ are equal although they are defined differently.
\begin{theorem}\label{t3} 
	If $X$ is  a rationally  elliptic CW-complex, then $\eta_{X}=\rho_{X}$.
\end{theorem}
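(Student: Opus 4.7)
The plan is to reduce $\eta_X = \rho_X$ to a direct dimension count using the isomorphism already established in Proposition~\ref{p3}. That proposition, combined with identification~(\ref{14}), provides $\operatorname{dim} L^{i+1} = \operatorname{dim} \Gamma_i(X)$ for all $i \geq 2$. First I would substitute this identity into the Sullivan-side formula $\rho_X = 1 + \operatorname{dim} L^{\rm{even}} - \operatorname{dim} L^{\rm{odd}}$ from Definition~\ref{d1} and reindex via $j = i+1$, converting the expression into an alternating sum of $\operatorname{dim} \Gamma_j(X)$. Two boundary observations keep the bookkeeping clean: $L^2 = L^3 = 0$ because $V^1 = 0$ (noted right after Definition~\ref{d1}), and $\Gamma_2(X) = 0$ for a simply connected $X$ because $X^1$ is a point, so that $\pi_2(X^2)\otimes\Q \to \pi_2(X^2, X^1)\otimes\Q$ is an isomorphism. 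Comparing the reindexed expression with $\eta_X = 1 + \operatorname{dim} \Gamma_{\rm{even}}(X) - \operatorname{dim} \Gamma_{\rm{odd}}(X)$ from Definition~\ref{d3} would then deliver the equality.

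An equivalent alternative route is to work with Euler characteristics of the two Whitehead sequences. Taking the alternating sum of dimensions in the topologically-phrased Whitehead exact sequence~(\ref{12}) expresses $\sum_{i \geq 2}(-1)^i \operatorname{dim} \Gamma_i(X)$ in terms of $\chi_H$ and $\chi_\pi$, and therefore $\eta_X$ in those terms. On the Sullivan side, the theorem proved just before Corollary~\ref{c1} gives $\rho_{\Lambda V} = \chi_{\Lambda V} - \chi_V$, and the identifications $\chi_{\Lambda V} = \chi_H$ and $\chi_V = \chi_\pi$ (from (\ref{11}) and (\ref{14})) translate this into the same expression for $\rho_X$. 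The two computations then agree, yielding $\eta_X = \rho_X$.

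The main obstacle will be tracking the parity correspondence through the index shift $L^{i+1} \leftrightarrow \Gamma_i(X)$ in the alternating sums, and confirming that the boundary terms ($L^2, L^3$ on the Sullivan side and $\Gamma_2(X)$ on the topological side) vanish so that the sums can be made to start at the same index. Once this careful bookkeeping is carried out, the equality $\eta_X = \rho_X$ follows directly from the dimensional identifications provided by Proposition~\ref{p3} and (\ref{14}).
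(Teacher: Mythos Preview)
Your first route is exactly the paper's proof: invoke Proposition~\ref{p3} (together with the identifications~(\ref{11}) and~(\ref{14})) to match $L^{i+1}$ with the dual of $\Gamma_i(X)$, and then read off the two alternating sums from Definitions~\ref{d1} and~\ref{d3}. So on the level of strategy you are aligned with the paper.

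However, the ``careful bookkeeping'' you postpone does \emph{not} close as you expect, and this is a genuine gap. From Proposition~\ref{p3} and~(\ref{14}) one gets $\dim L^{i+1}=\dim\Gamma_i(X)$, so
\[
\dim L^{\mathrm{even}}=\sum_{i\ \mathrm{odd}}\dim\Gamma_i(X)=\dim\Gamma_{\mathrm{odd}}(X),
\qquad
\dim L^{\mathrm{odd}}=\dim\Gamma_{\mathrm{even}}(X),
\]
and hence $\rho_X=1+\dim\Gamma_{\mathrm{odd}}(X)-\dim\Gamma_{\mathrm{even}}(X)=2-\eta_X$, not $\eta_X$. Your alternative route confirms the same discrepancy: the alternating sum of~(\ref{12}) gives $\sum_{i\geq 2}(-1)^i\dim\Gamma_i(X)=\chi_\pi-\chi_H+1$, so $\eta_X=2+\chi_\pi-\chi_H$, whereas the Sullivan computation preceding Corollary~\ref{c1} gives $\rho_X=\chi_H-\chi_\pi$. (For $\mathbb{C}P^n$ one checks directly that $\Gamma_3,\Gamma_5,\dots,\Gamma_{2n+1}$ are one-dimensional and $\Gamma_{\mathrm{even}}=0$, so $\eta_X=1-n$ while $\rho_X=1+n$.) The paper's one-line proof is silent on this shift and appears to rely on a sign convention for $\eta_X$ opposite to the one written in Definition~\ref{d3}; as the definitions stand, neither your argument nor the paper's yields $\eta_X=\rho_X$.
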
	
	\begin{proof} The result follows from the definitions \ref{d1} and  \ref{d3} and by applying proposition \ref{p3} after  taking into account the identifications (\ref{11}).
\end{proof}
\begin{corollary}\label{c4} If $X$ is  a rationally  elliptic CW-complex of formal dimension $n$, then
	$$
	0 \leq \rho_{X}-\sum_{i=4}^{n+1}(-1)^{i} \dim\,H^{i}(X^{[i-2]}; \Q) \leq 2 \,\,\,\,\,\,\,\,,\,\,\,\,\,\,	0\leq \eta_{X}-\sum_{i=3}^{n}(-1)^{i} \dim\, \Gamma_{i}(X)  \leq 2$$
\end{corollary}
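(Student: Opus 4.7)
The plan is to reduce both inequalities in \corref{c4} to \propref{p2}, using the standard dictionary between the Sullivan model invariants and the rational cohomology/homotopy of $X$.

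For the first inequality, I would combine \defref{d4}, which gives $\rho_{X}=\rho_{\Lambda V}$, with the identification (\ref{14}), which yields $\dim L^{i}=\dim H^{i}(X^{[i-2]};\Q)$ for every $i\ge 2$. Substituting both equalities into the conclusion of \propref{p2} produces the stated bound verbatim, so no additional work is required here.

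For the second inequality, I would first apply \thmref{t3} to rewrite $\eta_{X}=\rho_{X}$, then unfold $\eta_{X}$ via \defref{d3} as $1+\sum_{i\ge 2}(-1)^{i}\dim\Gamma_{i}(X)$. The key observation is that $\Gamma_{2}(X)=0$, which follows from the algebraic identification $\Gamma_{2}(X)\cong\Gamma_{1}$ in (\ref{11}) together with the fact that $\Gamma_{1}$ is trivial in the Quillen model of a simply connected space. With this, the quantity $\eta_{X}-\sum_{i=3}^{n}(-1)^{i}\dim\Gamma_{i}(X)$ collapses to $1+\sum_{i\ge n+1}(-1)^{i}\dim\Gamma_{i}(X)$. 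Using \propref{p3} to identify $\dim\Gamma_{i}(X)=\dim L^{i+1}$ and re-indexing $j=i+1$, the tail becomes $-\sum_{j\ge n+2}(-1)^{j}\dim L^{j}$, which by the intermediate estimate $-1\le\sum_{j=n+2}^{2n}(-1)^{j}\dim L^{j}\le 1$ from the proof of \propref{p2} (together with \propref{p1}(3), which kills terms with $j>2n$) lies in $[-1,1]$. Adding the constant $1$ gives the desired enclosure $[0,2]$.

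The main delicate point is the bookkeeping of signs and indices: the shift $j=i+1$ flips the parity, so the tail sum for $\eta_{X}$ appears with the opposite sign from the tail sum for $\rho_{X}$, and it is precisely the role of the $+1$ constants in the definitions of $\rho_{\Lambda V}$ and $\eta_{X}$ to absorb this discrepancy so that both estimates land in the common interval $[0,2]$. Verifying $\Gamma_{2}(X)=0$, so as to handle the starting-index discrepancy between $i=2$ in $\eta_{X}$ and $i=3$ in the stated sum, is the only step requiring genuine care.
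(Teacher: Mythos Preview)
Your argument is correct and follows the same route as the paper, which simply cites Propositions~\ref{p2}, \ref{p3} and Theorem~\ref{t3}. Your treatment of the second inequality is in fact more careful than the paper's one-line proof: because of the index shift $\dim\Gamma_i(X)=\dim L^{i+1}$, the sum $\sum_{i=3}^{n}(-1)^{i}\dim\Gamma_i(X)$ equals $-\sum_{j=4}^{n+1}(-1)^{j}\dim L^{j}$, so the second estimate does \emph{not} reduce to the first simply by substituting $\eta_X=\rho_X$; your tail computation via the intermediate bound $-1\le\sum_{j=n+2}^{2n}(-1)^{j}\dim L^{j}\le 1$ from the proof of Proposition~\ref{p2} is exactly what is needed. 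Two minor remarks: your appeal to Theorem~\ref{t3} at the start of the second paragraph is never actually used (you work directly from Definition~\ref{d3}), so you may drop it; and your verification that $\Gamma_2(X)=0$ is a genuine, necessary step that the paper leaves implicit.
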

\begin{proof} It follows from propositions \ref{p2}, \ref{p3} and theorem \ref{t3}.
\end{proof}
\begin{proposition}\label{p5} 
	If $X$ is  a rationally  elliptic CW-complex  such that 
	$$H^{2i}(X^{[2i-2]}; \Q)=0\,\,\,\,\,\,\,\,,\,\,\,\,\,\,	 \forall i\geq 2,$$  then we have
	\begin{enumerate}
		\item $H^{2i+1}(X^{[2i-1]}; \Q)=0$  for every $i\geq 2$;
		\item $\Gamma_{i}(X)=0$  for every $i\geq 2$;
		\item $\operatorname{dim} H^{*}(X;\Q)= \operatorname{dim} \pi_{i}(X)\otimes\Q +1$.
		\end{enumerate}
	\end{proposition}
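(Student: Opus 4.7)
The plan is to translate the hypothesis into a statement about the invariants $L^i$ of the Sullivan model and then invoke Corollary \ref{c5}. Let $(\Lambda V,\partial)$ denote the Sullivan model of $X$. From the identifications in (\ref{14}) together with the definition (\ref{4}), we have, for every $i\geq 2$,
\[
H^{i+1}(X^{[i-1]};\Q)\;\cong\;H^{i+1}(\Lambda V^{\leq i-1})\;=\;L^{i+1}.
\]
Reindexing with $i+1=2k$, the assumption $H^{2k}(X^{[2k-2]};\Q)=0$ for every $k\geq 2$ is exactly $L^{2k}=0$ for every $k\geq 2$, which is the statement that $\operatorname{dim}L^{\text{even}}=0$ in the sense of Definition \ref{d1}.

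Now I would feed this into Corollary \ref{c5}. That corollary says that for an elliptic Sullivan algebra $\operatorname{dim}L^{\text{even}}\geq\operatorname{dim}L^{\text{odd}}$, so vanishing of $L^{\text{even}}$ forces $L^{\text{odd}}=0$ as well, hence $L^i=0$ for every $i\geq 2$; moreover, the same corollary yields the dimension identity $\operatorname{dim}H^*(\Lambda V)=\operatorname{dim}V^*+1$. Translating $L^{\text{odd}}=0$ back through the reindexing $i+1=2k+1$ gives $H^{2k+1}(X^{[2k-1]};\Q)=0$ for $k\geq 2$, which is assertion (1).

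For (2), I combine the vanishing $L^{i+1}=0$ for $i\geq 2$ with Proposition \ref{p3}, which provides the isomorphism $H^{i+1}(X^{[i-1]};\Q)\cong \operatorname{Hom}(\Gamma_i(X),\Q)$; since the left-hand side is zero, so is $\Gamma_i(X)$ for every $i\geq 2$ (the edge case $i=2$ is covered automatically because $L^2=L^3=0$ already, as noted after Definition \ref{d1}). For (3), I combine $\operatorname{dim}H^*(\Lambda V)=\operatorname{dim}V^*+1$ with the identifications $H^*(X;\Q)\cong H^*(\Lambda V)$ and $V^i\cong \operatorname{Hom}(\pi_i(X)\otimes\Q,\Q)$ from (\ref{14}); since the homotopy groups are finite dimensional, the latter gives $\operatorname{dim}V^*=\operatorname{dim}\pi_*(X)\otimes\Q$, yielding the required equality.

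There is no real obstacle here; the proposition is essentially the topological shadow of Corollary \ref{c5}, and the only care needed is the reindexing between the $L^i$ of the Sullivan model and the cohomology of Postnikov sections, together with correctly applying Proposition \ref{p3} in the edge range $i=2$. The elliptic hypothesis on $X$ is used only to guarantee that the Sullivan model is elliptic, so that Corollary \ref{c5} applies.
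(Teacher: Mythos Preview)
Your proof is correct and follows exactly the paper's approach: the paper's own proof consists of the single sentence ``All the assertions are a consequence of Corollary~\ref{c5} and the formula~(\ref{16}),'' and you have simply unpacked those references with the appropriate reindexing via (\ref{14}) and (\ref{4}). Your treatment of the edge case $i=2$ and the explicit translation between $L^i$ and the Postnikov-section cohomology are welcome details that the paper leaves implicit.
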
	
\begin{proof} All the assertions are a consequence of corollary \ref{c5} and the formula (\ref{16}). 
\end{proof}
\begin{theorem}\label{cc1} Let  $X$ be a rationally elliptic CW-complex $X$. If  $H^{2i}(X^{[2i-2]}; \Q)= 0$ for every $i\geq 2$, then    $X$ is  rationally an odd-dimensional sphere.
\end{theorem}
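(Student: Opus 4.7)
The plan is to work on the minimal Sullivan model $(\Lambda V, \partial)$ of $X$, which is elliptic by hypothesis. Combining the identifications in (\ref{14}) with (\ref{4}) shows that $H^{2i}(X^{[2i-2]}; \Q) \cong L^{2i}$, so the standing assumption translates into $L^{2i} = 0$ for every $i \geq 2$, i.e.\ $L^{\text{even}} = 0$. \corref{c5} then furnishes $L^{\text{odd}} = 0$ as well, together with the isomorphisms $H^i(\Lambda V) \cong V^i$ for every $i \geq 2$ (equation (\ref{99})).

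The key next step is to observe that the map $H^i(\Lambda V) \to V^i$ appearing in the Whitehead exact sequence (\ref{5}) is the ``linear part'' projection. Concretely, every cocycle of degree $i$ writes uniquely as $v + \omega$ with $v \in V^i$ and $\omega \in \Lambda^{\geq 2} V$, and the minimality condition $\partial V \subset \Lambda^{\geq 2} V$ makes the rule $[v + \omega] \mapsto v$ well-defined. Using this, I would argue that the cup product vanishes on $H^{\geq 2}(\Lambda V)$: the product of two such representative cocycles lies entirely in $\Lambda^{\geq 2} V$, so its linear part is zero, and the isomorphism at degree $i+j$ forces the cohomology class to vanish.

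Finally, I would invoke Halperin's Poincar\'e duality theorem for elliptic Sullivan algebras: with $n$ the formal dimension, $H^n(\Lambda V) \cong \Q$ and the cup pairing $H^i(\Lambda V) \otimes H^{n-i}(\Lambda V) \to H^n(\Lambda V)$ is non-degenerate. Coupled with the triviality of products on $H^{\geq 2}$ and with $H^1 = 0$, this forces $H^i(\Lambda V) = 0$ for all $0 < i < n$. The isomorphism $H^i \cong V^i$ then concentrates $V$ in the single degree $n$ with $\dim V^n = 1$, and ellipticity rules out $n$ even (since $\Lambda(\Q\langle x\rangle) = \Q[x]$ would have infinite-dimensional cohomology), so $n$ must be odd and $(\Lambda V, \partial) = (\Lambda(y), 0)$ with $|y| = n$, the Sullivan model of $S^n$. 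The main technical hurdle is pinning down the linear-part interpretation of the Whitehead map via minimality; once that is secured, combining trivial cup products with Poincar\'e duality cleanly forces $X$ to be rationally an odd-dimensional sphere.
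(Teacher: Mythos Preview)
Your argument is correct and takes a genuinely different route from the paper's own proof. Both proofs begin identically: the hypothesis translates to $L^{\text{even}}=0$ and \corref{c5} then gives $L^i=0$ for all $i$, hence $H^i(\Lambda V)\cong V^i$ for $i\geq 2$. From there the paths diverge. The paper passes to the Quillen model via \propref{p3}, obtains $\Gamma_i=0$ for all $i\geq 2$, concludes that the maps $b_{i+1}$ in (\ref{1}) vanish (so the rational attaching maps are trivial), and hence that the Quillen model may be taken with zero differential; ellipticity then forces $W=W_{2k}\cong\Q$. You instead remain entirely on the Sullivan side: interpreting the Whitehead map $H^i(\Lambda V)\to V^i$ as the linear-part projection (valid by minimality), you observe that all cup products on $H^{\geq 2}(\Lambda V)$ vanish, and then Poincar\'e duality for elliptic Sullivan algebras kills $H^i$ for $0<i<n$, pinning $V$ down to a single odd-degree generator.

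What each approach buys: the paper's argument is more geometric---it identifies $X$ rationally with a wedge of spheres and then invokes ellipticity---but requires shuttling between the Sullivan and Quillen frameworks through \propref{p3}. Your argument is cleaner in that it never leaves the Sullivan model and uses only the multiplicative structure together with a single external input (Poincar\'e duality, \cite{B4}, Proposition~38.3); it also makes transparent \emph{why} the cohomology collapses, rather than deducing it indirectly from a trivial-differential Quillen model.
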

\begin{proof}  
Let $(\mathbb{L}(W), \delta)$ denote the Quillen model of $X$. First, if  $H^{2i}(X^{[2i-2]}; \Q)= 0$ for every $i\geq 2$, then by (\ref{4}) and the identification (\ref{14}), we deduce that $L^{\text {even }}=0$ implying $L^{\text {odd }}=0$ according to  corollary \ref{c4}.  Next, by proposition \ref{16},  we get $\Gamma_{i}=0$ for every $i\geq 2$, where $\Gamma_{i}=0$  is defined in (\ref{2}). Therefore, the linear map  $b_{i+1}:W_{i+1} \to \Gamma_{i}$,  given in (\ref{1}), is trivial for $i\geq 2$. Now using the relation  (\ref{01}), it follows that $b_{i}(w)=[\delta(w)]$,  for $i\geq 2$ and  every $w\in W_{i+1}$.

Consequently, using the properties of the  Quillen model (see \cite{B4}, pp. 323), it follows that all the attaching maps of the space $X$ are rationally trivial.  Consequently the  Quillen model  of $X$ can be chosen as    $(\mathbb{L}(W), 0)$   (with trivial differential). Moreover,  and as the space $X$ is rationally   elliptic, we must have  $W =W_{2k}\cong\Q$ for a certain $k\geq 1$ . Thus,  the Quillen model of $X$ has the form  $(\mathbb{L}(W_{2i}), 0)$ implying that $X$ is rationally  an odd-dimensional sphere.
\end{proof}
\begin{corollary} Let  $X$ be a rationally elliptic CW-complex $X$.  If $ \pi_{2i-1}(X^{2i-1})\otimes\Q=0 $ for every $i\geq 1$, then  $X$ is    rationally an odd-dimensional sphere.
\end{corollary}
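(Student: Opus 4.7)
The proof is a short chain through three results already in hand, so the plan is essentially a bookkeeping exercise: translate the skeletal homotopy hypothesis into the Postnikov cohomology hypothesis of Theorem~\ref{cc1}, and then invoke Theorem~\ref{cc1}.

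First I would fix $i\geq 2$ and apply Remark~\ref{r2}. That remark records the implication $\pi_i(X^i)\otimes\Q=0\Longrightarrow \Gamma_i(X)=0$ for every $i\geq 2$. Taking $i$ replaced by $2i-1$ (which is $\geq 3$ once $i\geq 2$), the hypothesis $\pi_{2i-1}(X^{2i-1})\otimes\Q=0$ gives $\Gamma_{2i-1}(X)=0$ for all $i\geq 2$.

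Next I would feed this into Proposition~\ref{p3}, whose content is the duality isomorphism $H^{i+1}(X^{[i-1]};\Q)\cong \text{\rm{Hom}}(\Gamma_i(X),\Q)$. Substituting $i\mapsto 2i-1$, the vanishing of $\Gamma_{2i-1}(X)$ is equivalent (over $\Q$, with finite-dimensional spaces) to the vanishing of $H^{2i}(X^{[2i-2]};\Q)$ for every $i\geq 2$. This is exactly the hypothesis required by Theorem~\ref{cc1}.

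Finally, a direct appeal to Theorem~\ref{cc1} yields that $X$ is rationally an odd-dimensional sphere, which closes the argument. There is no real obstacle here; the only thing to watch is the index bookkeeping between the three statements (skeleton, $\Gamma$, Postnikov section), and the fact that Remark~\ref{r2} is applicable only for indices $\geq 2$, which is why the case $i=1$ of the hypothesis is not needed for the conclusion.
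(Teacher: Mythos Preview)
Your proposal is correct and follows essentially the same route as the paper: use Remark~\ref{r2} (implication~(\ref{22})) to pass from $\pi_{2i-1}(X^{2i-1})\otimes\Q=0$ to $\Gamma_{2i-1}(X)=0$, then use the duality of Proposition~\ref{p3} to obtain $H^{2i}(X^{[2i-2]};\Q)=0$, and finish with Theorem~\ref{cc1}. The paper cites Corollary~\ref{c2} where you cite Proposition~\ref{p3}, but the substance is identical.
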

\begin{proof}  
	If    $ \pi_{2i-1}(X^{2i-1})\otimes\Q=0 \,,\,\forall i\geq 2$, then by the implication (\ref{22}) and corollary \ref{c2} we deduce that  $H^{2i}(X^{[2i-2]}; \Q)= 0\,,\,\forall i\geq 2$. Then we apply theorem  \ref{cc1}
\end{proof}
\begin{corollary} Let  $X$ be a rationally elliptic CW-complex $X$.  If  $X$ is \text{\rm{2}}-connected and $ \pi_{2i-1}(X^{2i-2})\otimes\Q=0$ for every $i\geq 1$, then  $X$ is    rationally an odd-dimensional sphere
\end{corollary}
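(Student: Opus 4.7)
The plan is to reduce this corollary to \thmref{cc1} via the identifications provided by \corref{c2}(3) and \propref{p3}. The hypothesis concerns homotopy groups of skeleta, while \thmref{cc1} requires vanishing of cohomology of Postnikov sections, so the two intermediate results will serve to translate one into the other.

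First, since $X$ is 2-connected, \corref{c2}(3) applies and gives an isomorphism
\[
\Gamma_{i}(X) \cong \pi_{i}(X^{i-1}) \otimes \Q \qquad \text{for every } i \geq 2.
\]
Specialising to the odd index $i = 2k-1$ with $k \geq 2$ yields $\Gamma_{2k-1}(X) \cong \pi_{2k-1}(X^{2k-2}) \otimes \Q$, which vanishes by hypothesis (the case $k=1$ of the assumption is automatic since $X^{0}$ is a point).

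Next, I would invoke \propref{p3}, which gives $H^{i+1}(X^{[i-1]}; \Q) \cong \operatorname{Hom}(\Gamma_{i}(X), \Q)$; taking $i = 2k-1$ this reads
\[
H^{2k}(X^{[2k-2]}; \Q) \cong \operatorname{Hom}(\Gamma_{2k-1}(X), \Q) = 0 \qquad \text{for every } k \geq 2.
\]
This is precisely the hypothesis required to apply \thmref{cc1}, which then concludes that $X$ is rationally an odd-dimensional sphere.

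There is essentially no obstacle here: the argument is a bookkeeping exercise that chains together the 2-connected identification of $\Gamma_{i}(X)$ with a homotopy group of a skeleton, the duality between $\Gamma_{i}(X)$ and cohomology of a Postnikov section, and the already established sphere-recognition theorem. The only point to watch is the index shift (odd $i = 2k-1$ produces the even cohomological degree $2k$ required by \thmref{cc1}) and checking that the low-index case $k=1$ imposes no non-trivial condition.
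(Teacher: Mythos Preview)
Your proof is correct and follows essentially the same route as the paper's own proof: the paper simply says ``by \corref{c2}'' to pass from the hypothesis to $H^{2i}(X^{[2i-2]};\Q)=0$ and then applies \thmref{cc1}, whereas you spell out the two ingredients (\corref{c2}(3) for the identification $\Gamma_{2k-1}(X)\cong\pi_{2k-1}(X^{2k-2})\otimes\Q$ and \propref{p3} for the duality with the Postnikov cohomology) explicitly. The index bookkeeping and the handling of the low case $k=1$ are fine.
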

\begin{proof}  
	If   $X$ is \text{\rm{2}}-connected and $ \pi_{2i-1}(X^{2i-2})\otimes\Q=0$ for every $i\geq 1$, then by   corollary  \ref{c2}   we deduce that  $H^{2i}(X^{[2i-2]}; \Q)= 0\,,\,\forall i\geq 2$. Then we apply theorem  \ref{cc1}.
	\end{proof}
\begin{example}\label{e1} The purpose of this example is to compute the numerical invariants $\rho_{\mathbb{C} P_{0}^{n}}$ and $\eta_{\mathbb{C} P_{0}^{n}}$, for the space $\mathbb{C} P_{0}^{n}$ which is the rationalised  complex projective space $\mathbb{C} P^{n}$. Indeed; the Sullivan model of $\mathbb{C} P_{0}^{n}$ is $(\Lambda(V^{2} \oplus V^{2 n+1}), \partial)$ with $V^{2}=$ $\langle x\rangle, V^{2 n+1}=\langle y\rangle$ and $\partial y=x^{n+1}$ (see \cite{B4}, example 5, page 333). As we have $L^{i}=$ $H^{i}(\Lambda(V^{\leq i-2}))$, it follows that $L^{\geq 2 n+3}=0 .$ Moreover it is easy to see that $\operatorname{dim} L^{i}=1$ for $i$ even and $\operatorname{dim} L^{i}=0$ for $i$ odd. Therefore $\rho_{\mathbb{C P}_{0}^{n}}=1+L^{\text {even }}=1+n$ and since $\chi_{\pi}(\mathbb{C} P_{0}^{n})=\operatorname{dim} V^{2}-\operatorname{dim} V^{2 n+1}=0$, we deduce that $\rho_{\mathbb{C} P^{n}}=\chi_{\mathbb{C P}_{0}^{n}}=1+n.$ As a result we get $\eta_{\mathbb{C} P^{n}}=1+n$. Notice that the Quillen model of $\mathbb{C} P_{0}^{n}$ is given by
$$
\mathbb{L}(W_{1}, \ldots, W_{2 n-1}) \quad, \quad W_{i}=\left\langle w_{i}\right\rangle \quad, \quad \delta(w_{k})=\frac{1}{2} \sum_{i+j=k}\left[w_{i}, w_{j}\right]
.$$
Hence, for $\Gamma_{i}=H_{i}(\mathbb{L}(W_{\leq i-2}))$, we deduce that $\eta_{\mathbb{C P}_{0}^{n}}=1+\Gamma_{\text {even }}-\Gamma_{\text {odd }}=1+n$.
\end{example}
\subsection{$F_0$-spaces}
\begin{definition}\label{d5} An $F_0$-space is a  rationally  elliptic CW-complex $X$  whose rational cohomology
algebra is given by	$\Q[x_{1},\dots,x_{n}]/(P_{1},\dots,P_{n})$,  where the polynomials $ P_{1},\dots,P_{n} $ form a regular sequence in $\Q[x_{1},\dots,x_{n}]$, i.e., $ P_{1}\neq 0$ and for every $i\geq 1$,  $P_{i}$ is not a zero divisor in  $\Q[x_{1},\dots,x_{n}]/(P_{1},\dots,P_{i-1})$. 
\end{definition}
For instance, products of even spheres,  complex  Grassmannian manifolds  and homogeneous spaces $G/H$ such that  rank $G$ = rank $H$ are $F_0$-spaces.
\begin{proposition}\label{t13}(\cite{B4} Proposition  \rm{32.10}).
Let  $X$  be a rationally  elliptic CW-complex and $(\Lambda V, \partial)$ its Sullivan model. The following   statements are equivalent 
	\begin{enumerate}
		\item $X$  is  an $F_0$-space;
			\item  $H^{\rm{odd}}(\Lambda V)=0;$
		\item $\operatorname{dim}V^{\rm{even}}-\operatorname{dim}V^{\rm{odd}}=0$ and $(\Lambda V, \partial)$ is  pure, i.e., $\partial(V^{\text {even }})=0$ and $\partial(V^{\text {odd }}) \subseteq \Lambda V^{\text {even }}.$
		%\item  $\chi_{\Lambda W}=0 $
		%\item The polynomials $P_{1},\dots,P_s$ form a regular sequence in $\Bbb Q[x_1,\dots,x_t]$. 
	\end{enumerate}
\end{proposition}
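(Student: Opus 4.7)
The plan is to establish the cyclic chain of implications $(1) \Rightarrow (2) \Rightarrow (3) \Rightarrow (1)$, using Theorem \ref{t1} to control the Euler characteristics at each step and invoking structural facts about pure Sullivan algebras in the last two steps.

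For $(1) \Rightarrow (2)$, the hypothesis gives $H^{*}(X;\Q) \cong \Q[x_1,\ldots,x_n]/(P_1,\ldots,P_n)$ as graded algebras. Since $\Q$ has characteristic zero and the algebra is graded commutative, every polynomial generator $x_i$ must lie in even degree, because an odd-degree generator would satisfy $x_i^2=0$ and could not be a polynomial variable. Hence $H^{*}(X;\Q)$ is concentrated in even degrees and $H^{\rm{odd}}(\Lambda V)=0$.

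For $(2) \Rightarrow (3)$, the assumption $H^{\rm{odd}}(\Lambda V)=0$ forces $\chi_{\Lambda V}=\dim H^{\rm{even}}(\Lambda V)>0$, so Theorem \ref{t1}(5) immediately yields $\chi_V=0$, i.e.\ $\dim V^{\rm{even}}=\dim V^{\rm{odd}}$. The delicate part is showing that $(\Lambda V,\partial)$ may be taken pure. My plan is to introduce the odd word-length filtration $F_p=\Lambda V^{\rm{even}}\otimes \Lambda^{\geq p}V^{\rm{odd}}$ and decompose $\partial=\partial_{\sigma}+\partial'$, where $\partial_{\sigma}$ is the component sending $V^{\rm{even}}$ to $0$ and $V^{\rm{odd}}$ into $\Lambda V^{\rm{even}}$, while $\partial'$ raises odd word-length. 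A spectral sequence whose $E_1$-page is the cohomology of the pure part $(\Lambda V,\partial_{\sigma})$ should collapse because the vanishing $H^{\rm{odd}}(\Lambda V)=0$ obstructs all further differentials; a standard perturbation-theoretic argument then produces an automorphism of the free graded algebra $\Lambda V$ conjugating $\partial$ to $\partial_{\sigma}$.

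For $(3) \Rightarrow (1)$, assuming $(\Lambda V,\partial)$ pure with common dimension $n$, choose bases $x_1,\ldots,x_n$ of $V^{\rm{even}}$ and $y_1,\ldots,y_n$ of $V^{\rm{odd}}$, and set $P_j:=\partial y_j \in \Q[x_1,\ldots,x_n]=\Lambda V^{\rm{even}}$. Then $(\Lambda V,\partial)$ is exactly the Koszul complex on the sequence $(P_1,\ldots,P_n)$. Ellipticity gives $\dim H^{*}(\Lambda V)<\infty$, and the classical criterion in commutative algebra says that a Koszul complex on $n$ elements of $\Q[x_1,\ldots,x_n]$ has finite-dimensional cohomology if and only if the sequence is regular, in which case $H^{*}(\Lambda V)\cong \Q[x_1,\ldots,x_n]/(P_1,\ldots,P_n)$. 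This identifies $X$ as an $F_0$-space. The main obstacle is the pureness claim inside $(2)\Rightarrow (3)$: it is not formal, and relies decisively on $H^{\rm{odd}}=0$ to absorb the higher-order terms of $\partial$ by a genuine change of generators.
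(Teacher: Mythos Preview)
The paper does not prove this proposition; it is quoted from \cite{B4}, Proposition~32.10, and used as a black box in Theorem~\ref{t4}. So there is no in-paper argument to compare your attempt against.

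Your implications $(1)\Rightarrow(2)$ and $(3)\Rightarrow(1)$ are correct and match the standard treatment. The step $(2)\Rightarrow(3)$, however, has a genuine gap. You assert that the odd spectral sequence collapses ``because the vanishing $H^{\rm odd}(\Lambda V)=0$ obstructs all further differentials.'' This is the wrong direction: knowing only that the abutment $E_\infty$ is concentrated in even total degree does not force $d_r=0$ for $r\geq 1$; non-trivial differentials could still cancel odd-degree classes along the way. What is actually needed is that $E_1=H^*(\Lambda V,\partial_\sigma)$ is \emph{already} concentrated in even total degree, so that every $d_r$ (of total degree $+1$) vanishes for parity reasons. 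Establishing this requires two facts you have not supplied: (i) $(\Lambda V,\partial_\sigma)$ is itself elliptic whenever $(\Lambda V,\partial)$ is --- this is \cite{B4}, Proposition~32.4, and is not automatic; (ii) a pure elliptic Sullivan algebra with $\dim V^{\rm even}=\dim V^{\rm odd}$ has cohomology $\Q[x_i]/(\partial_\sigma y_j)$, hence lies entirely in even degrees (\cite{B4}, Proposition~32.2). Only after these two steps does the collapse follow, and only then does the perturbation argument you invoke --- which must be fed the even-degree concentration of $E_1$, not of $E_\infty$ --- succeed in straightening $\partial$ into $\partial_\sigma$.
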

\begin{theorem}\label{t4} If $X$ is an $F_0$-space, then
$$
\Gamma_{2i+1}(X)=H^{2i+2}(X^{[2i]}, \Q)=0 \quad, \quad \forall i \geq 1.
$$
Moreover, if  $X$ is \text{\rm{2}}-connected, then $\pi_{2i+1}(X^{2i})\otimes \Q=0$ for every   $i \geq 1$.
\end{theorem}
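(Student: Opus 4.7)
The plan is to pass to the Sullivan model $(\Lambda V,\partial)$ of $X$ and exploit the purity guaranteed by the $F_0$ hypothesis. By Proposition~\ref{t13}, we may take $(\Lambda V,\partial)$ pure with $\partial V^{\rm{even}}=0$, $\partial V^{\rm{odd}}\subseteq\Lambda V^{\rm{even}}$, $H^{\rm{odd}}(\Lambda V)=0$, and $\dim V^{\rm{even}}=\dim V^{\rm{odd}}$. The identification (\ref{14}) gives $H^{2i+2}(X^{[2i]};\Q)\cong L^{2i+2}$, and Proposition~\ref{p3} dualizes this to $\Gamma_{2i+1}(X)$; so both vanishings collapse to showing $L^{2i+2}=0$ for every $i\geq 1$.

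I would first handle the odd-indexed $L^{2k+1}$ as a warm-up. Purity makes the Whitehead boundary $b^{2k}:V^{2k}\to L^{2k+1}$ zero identically (since $\partial$ kills $V^{\rm{even}}$), and together with $H^{2k+1}(\Lambda V)=0$ the segment
$$V^{2k}\xrightarrow{\,0\,}L^{2k+1}\to H^{2k+1}(\Lambda V)=0$$
of the Whitehead exact sequence (\ref{5}) forces $L^{2k+1}=0$ for all $k\geq 1$, i.e.\ $\dim L^{\rm{odd}}=0$.

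For the even-indexed side, since $\chi_V=0$ by Proposition~\ref{t13}(3), the formula $\rho_{\Lambda V}=\chi_{\Lambda V}-\chi_V$ from Section~2 reduces to $\rho_{\Lambda V}=\chi_{\Lambda V}$; combined with $\dim L^{\rm{odd}}=0$ and Definition~\ref{d1}, this gives the global identity $\dim L^{\rm{even}}=\chi_{\Lambda V}-1$. To convert this into the pointwise vanishing $L^{2i+2}=0$ I would work degree by degree with the Whitehead segment
$$0=H^{2i+1}(\Lambda V)\to V^{2i+1}\xrightarrow{b^{2i+1}}L^{2i+2}\to H^{2i+2}(\Lambda V)\to V^{2i+2}\xrightarrow{\,0\,}L^{2i+3}=0,$$
using the explicit $F_0$ presentation $H^*(\Lambda V)=\Q[x_1,\dots,x_n]/(P_1,\dots,P_n)$ with $\partial y_j=P_j$ to identify the four pieces. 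The ``moreover'' clause then follows immediately from Remark~\ref{r2}: when $X$ is 2-connected, $\Gamma_i(X)\cong\pi_i(X^{i-1})\otimes\Q$, which converts $\Gamma_{2i+1}(X)=0$ into $\pi_{2i+1}(X^{2i})\otimes\Q=0$.

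The main obstacle is the last step: purity cleanly kills the odd $L$'s and the $F_0$ dimension balance fixes $\dim L^{\rm{even}}$ globally, but extracting the individual vanishings $L^{2i+2}=0$ requires a substantive use of the regular-sequence property of $(P_1,\dots,P_n)$ to force the Whitehead boundary $b^{2i+1}:V^{2i+1}\to L^{2i+2}$ to be surjective and the tail of the four-term sequence to split off trivially in each even degree.
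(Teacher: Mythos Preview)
Your ``warm-up'' is not a warm-up: it is the paper's entire argument. The paper's proof uses purity ($\partial V^{\mathrm{even}}=0$, hence $b^{2k}=0$) and $H^{\mathrm{odd}}(\Lambda V)=0$ to read off $L^{\mathrm{odd}}=0$ from the segment $V^{2k}\xrightarrow{0}L^{2k+1}\to H^{2k+1}(\Lambda V)=0$, exactly as you do, and then simply invokes (\ref{11}), (\ref{14}), (\ref{16}) to translate back to $\Gamma$'s and Postnikov cohomology. It never attempts to show $L^{2i+2}=0$.

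The step you flag as the ``main obstacle'' --- forcing each $L^{2i+2}=0$ --- is not merely difficult but false, and in fact you have already proved it false. Your own global identity $\dim L^{\mathrm{even}}=\chi_{\Lambda V}-1$ shows that $L^{\mathrm{even}}=0$ would force $\chi_{\Lambda V}=1$, which fails for any $F_0$-space with nontrivial even cohomology. Concretely, Example~\ref{e1} computes for $\mathbb{C}P^n$ (an $F_0$-space) that $\dim L^{2k}=1$ for $2\le k\le n+1$, so $H^{2i+2}\bigl((\mathbb{C}P^n)^{[2i]};\Q\bigr)\cong L^{2i+2}\neq 0$. There is an index slip in the statement of Theorem~\ref{t4}: what the proof actually establishes, via $L^{\mathrm{odd}}=0$ and the identification $H^{j}(X^{[j-2]};\Q)\cong L^{j}$, is
\[
\Gamma_{2i}(X)\;=\;H^{2i+1}\bigl(X^{[2i-1]};\Q\bigr)\;=\;0,\qquad i\ge 1,
\]
with the ``moreover'' becoming $\pi_{2i}(X^{2i-1})\otimes\Q=0$ in the $2$-connected case. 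Your warm-up together with Remark~\ref{r2} already proves this corrected version in full; you should discard the $L^{\mathrm{even}}$ step rather than try to complete it.
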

\begin{proof}  By (\ref{5}), the Whitehead exact sequence of $(\Lambda V, \partial)$ can be written as
\begin{equation*}\label{66}
\cdots \rightarrow H^{2 i}(\Lambda V) \rightarrow V^{2 i} \stackrel{b^{2 i}}{\longrightarrow} L^{2 i+1} \longrightarrow H^{2 i+1}(\Lambda V) \longrightarrow V^{2 i+1} \stackrel{b^{2 i+1}}{\longrightarrow} \cdots
\end{equation*}
As $X$ is an $F_0$-space, then by proposition \ref{t13}, the Sullivan model $(\Lambda V, \partial)$ of $X$ satisfies  $H^{\text {odd }}(\Lambda V)=0$ and  $\partial(V^{\text {even }})=0$, it follows that the maps $b^{\text {even }}=0 .$ Consequently,  $L^{\text {odd }}=0.$ So,  taking into account the identifications (\ref{11}) and (\ref{14}), the result follows from the relation (\ref{16}).
\end{proof}
\section*{Acknowledgments}
The author is deeply grateful to the referee for a careful reading of the paper
and valuable suggestions that greatly improved the manuscript.

\bibliographystyle{amsplain}

\end{document}